\documentclass[12pt]{amsart}
\pdfoutput=1
\title{ F-threshold of determinantal rings }
\author{Barbara Betti, Alessio Moscariello, Francesco Romeo, Jyoti Singh}
\address{(Barbara Betti) Max Planck Institute for Mathematics in the Sciences, Inselstrasse 22, 04103
Leipzig, Germany}
\email{barbara.betti@mis.mpg.de}
\address{(Alessio Moscariello) Dipartimento di Matematica e Informatica\\Università degli Studi di Catania\\Catania\\Italy}
\email{alessio.moscariello@unict.it}
\address{(Francesco Romeo) Department of Mathematics \\ University of Trento \\ via Sommarive, 14, 38123 Povo (Trento)\\ Italy }
\email{francesco.romeo-3@unitn.it}
\address{(Jyoti Singh) Department of Mathematics, Visvesvaraya National Institute of Technology, Nagpur\\ India}
\email{jyotisingh@mth.vnit.ac.in}
\date{}


\usepackage[utf8]{inputenc}
\usepackage[T1]{fontenc}

\usepackage{geometry}
\usepackage{blindtext}
\usepackage{amsmath}
\usepackage{amsthm}
\usepackage{dsfont}
\usepackage{amssymb}
\usepackage[shortlabels]{enumitem}
\usepackage{booktabs}
\usepackage{color}
\usepackage{graphicx}
\graphicspath{{./images/}}
\usepackage[dvipsnames]{xcolor}
\usepackage{tikz}
\usepackage{tikz-cd}
\usepackage{bm}
\usepackage{bbm}
\usepackage{nicefrac}
\usepackage{mathtools}
\usepackage{epstopdf}
\usepackage{multicol}
\usepackage{rotating}
\usepackage[shortcuts]{extdash}
\usepackage{hyperref}
\usepackage{tikz,pgf}
\usetikzlibrary{arrows}
\usepackage{xspace}
\usepackage[all]{xy}
\usepackage{mathrsfs}
\usepackage{verbatim}
\usepackage{marvosym}

\usepackage{alphabeta}

\usepackage[greek,english]{babel}

\usepackage{listings}
\usepackage{hyperref}
\hypersetup{
	pdfpagemode=UseNone,
	colorlinks=true,
	linkcolor=blue,
	citecolor=blue,
	urlcolor=blue,
	allbordercolors=white,
	pdftitle={Pragmatic Open Problem n.2},
	pdfauthor={Barbara Betti}}

\newtheorem{thm}{Theorem}
\newtheorem{lemm}[thm]{Lemma}
\newtheorem{prop}[thm]{Proposition}
\newtheorem{cor}[thm]{Corollary}

\theoremstyle{definition}

\newtheorem{problem}[thm]{Problem}
\newtheorem{conjecture}[thm]{Conjecture}
\newenvironment{example}
{\pushQED{\qed}\examplex}
{\popQED\endexamplex}

\newtheorem{defn}[thm]{Definition}

\theoremstyle{remark}
\newtheorem{remark}[thm]{Remark}

\newcommand{\numberset}{\mathbb}

\newcommand{\D}{\mathcal{D}}

\newcommand{\N}{\numberset{N}}

\definecolor{cof}{RGB}{219,144,71}
\definecolor{pur}{RGB}{186,146,162}
\definecolor{greeo}{RGB}{91,173,69}
\definecolor{greet}{RGB}{52,111,72}


\newcommand{\mm}{\mathfrak{m}}

\begin{document}
	
	\maketitle
	
	\begin{abstract}
		In this paper, by using a combinatorial approach, we establish a new upper bound for the F-threshold $c^\mm(\mm)$ of determinantal rings generated by maximal minors. We prove that $c^\mm(\mm)$ coincides with the $a$-invariant in the case of $3\times n$ and $4\times n$ matrices and we conjecture such equality holds for all matrices.
	\end{abstract}

\section{Introduction}
The $F$-threshold is an important numerical invariant of positive characteristic rings that measure the behaviour of singularities. It was first introduced by Mustaţă, Takagi and Watanabe for regular local rings \cite{Musta2004FthresholdsAB} as analogue of log-canonical threshold for characteristic $p$ rings \cite{TAKAGI2004278}. In \cite{HMTW2008}, Huneke, Mustaţă, Takagi and Watanabe extended this invariant to a more general setting and related it to tight closure, integral closure and multiplicities.  A formula of the $F$-threshold for binomial hypersurfaces is given by Matsuda, Ohtani and Yoshida in \cite{doi:10.1080/00927870903107940}. Further, Chiba and Matsuda computed the F-threshold of Hibi rings, which are a kind of graded toric rings on a finite distributive lattice in \cite{CM2015}. The existence of $F$-threshold for every noetherian ring was proved by De Stefani, Nuñez-Betancourt and Pérez \cite{destefani2017existence}. However, the actual computation of the $F$-threshold for large classes of rings is still a challenging problem. In this paper, we approach this problem for a subclass of determinantal rings. 

Let $K$ be a $F$-finite field of characteristic $p>0$, $X=(x_{ij})$ a $n\times m$ matrix of indeterminates and $S=K[X]$ the polynomial ring over the entries of $X$. Given $t\leq m\leq n$ we consider the ideal $I_t$ of the $t\times t$ minors of $X$ and the corresponding determinantal ring $R=S/I_t$. These rings and their properties are extensively studied since they are ubiquitous in commutative algebra and algebraic geometry. It is indeed well-known that determinantal rings are Cohen-Macaulay and, if $n=m$, also Gorenstein (\cite[Thm 5.5.6]{SVANES1974369}). In positive characteristic determinantal rings are strongly $F$-regular; this was firstly proved by Hochster and Huneke in \cite{hochster1994tight} and by Bruns and Conca in \cite[Thm 3.1]{10.1307/mmj/1030132183} with a different approach. Now we recall the definition of $F$-threshold of a ring.

\begin{defn}
    Let us consider $R$ a ring of characteristic $p>0$ and $\mathfrak{a},J\subseteq R$ two ideals such that $\mathfrak{a}\subseteq \sqrt{J}$. For a fixed positive integer $e$ we define the finite integer
    \[ \nu_{\mathfrak{a}}^J(p^e)\colon= \max \{r\in \N \mid \mathfrak{a}^r \not\subseteq J^{[p^e]} \}. \]
    The \emph{$F$-threshold of $\mathfrak{a}$ with respect to $J$} is the limit
    \[ c^J(\mathfrak{a})\colon=  \lim_{e\to\infty} \dfrac{\nu_{\mathfrak{a}}^J(p^e)}{p^e}.\]
\end{defn}

If $R$ is a graded ring with maximal homogeneous ideal $\mathfrak{m}$, we refer to $c^{\mathfrak{m}}(\mathfrak{m})$ as the \emph{$F$-threshold of $R$}. In the literature this invariant is also called \emph{diagonal $F$-threshold}, as in \cite{doi:10.1080/00927872.2013.772187}. As mentioned before, we consider the following problem.

\begin{problem}\label{prob}
    Let $K$ be a $F$-finite field, $S=K[X]$, where $X=(x_{ij})$ is a $n\times m$ matrix of variables and $I=I_t(X)$ be the ideal generated by $t \times t$ minors of $X$, with $t\leq m\leq n$. Compute the $F-$threshold $c^{\mm}(\mm)$ of $R=S/I$, where $\mm \subseteq R$ is the maximal homogeneous ideal.
\end{problem}

The first unknown case is $t=m=n=3$, while the $t=m=n=2$ is solved in \cite[Thm 2]{doi:10.1080/00927870903107940}, for those parameters we have $c^{\mathfrak{m}}(\mathfrak{m})=2$. A computational approach to calculate the $F$-threshold for specific values of $n,m,t$ is not possible. Indeed computing $\nu^\mathfrak{m}_\mathfrak{m}(p^e)$, for example using the \texttt{Macaulay2} package \texttt{FrobeniusThresholds} \cite{FrobeniusThresholdsSource}, is unfeasible except for small values of $e$. However, some bounds are known.

\begin{thm}\label{lowerbound} Let $R$ be a standard graded $K$-algebra that is F-finite and F-pure, then:
\[ {\rm fpt}(\mathfrak{m})\leq -a(R)\leq c^{\mm}(\mm)\leq {\rm dim}(R). \]
    In the context of Problem \ref{prob}, we have
    \[  m(t-1)\leq n(t-1)  \leq c^{\mm}(\mm)\leq (m+n-t+1)(t-1).  \] 
\end{thm}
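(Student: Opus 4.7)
The first chain of inequalities is a compendium of standard estimates for $F$-pure, $F$-finite, standard graded algebras, and I would prove it by assembling three independent steps. The rightmost inequality is elementary: in any $d$-dimensional standard graded ring, a pigeonhole argument on the degree-$1$ generators of $\mm$ gives the containment $\mm^{d(p^e-1)+1} \subseteq \mm^{[p^e]}$, whence $\nu_{\mm}^{\mm}(p^e) \leq d(p^e-1)$ and so $c^{\mm}(\mm) \leq d = \dim R$. For the middle inequality $-a(R) \leq c^{\mm}(\mm)$, $F$-purity is essential: starting from a nonzero socle class of $H^{d}_{\mm}(R)$ in degree $a(R)$ and using that Frobenius acts injectively on top local cohomology for $F$-pure rings, a socle-degree analysis carried out through the direct limit $H^{d}_{\mm}(R) = \varinjlim R/(\theta_1^k,\ldots,\theta_d^k)$ along a homogeneous system of parameters of degree $1$ produces, for each $e$, a homogeneous witness forcing $\nu_{\mm}^{\mm}(p^e) \geq -a(R)\,p^e + O(1)$, which gives the desired bound in the limit. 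The leftmost inequality $\mathrm{fpt}(\mm) \leq -a(R)$ is a Fedder-type comparison between $F$-splittings and homogeneous socles, which I would invoke from the literature.

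For the determinantal application, $R = S/I_t$ is standard graded, $F$-finite, and strongly $F$-regular --- hence $F$-pure --- by the Hochster-Huneke and Bruns-Conca theorem recalled in the introduction, so the general chain applies to $R$. The inequality $m(t-1) \leq n(t-1)$ merely records the hypothesis $m \leq n$. The lower bound $n(t-1) \leq c^{\mm}(\mm)$ then follows by substituting the classical formula $a(R) = -n(t-1)$ --- derivable from the Bruns-Vetter description of the canonical module of a generic determinantal ring --- into the middle inequality of the abstract chain. The upper bound $c^{\mm}(\mm) \leq (m+n-t+1)(t-1)$ comes from the rightmost inequality together with the classical dimension formula $\dim R = mn - (m-t+1)(n-t+1)$, which a short algebraic expansion rewrites as $(m+n-t+1)(t-1)$.

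The main obstacle is conceptual rather than computational: the middle inequality in the abstract chain genuinely needs $F$-purity to preserve the socle class under Frobenius, and the determinantal specialization depends on precise knowledge of the $a$-invariant of $R = S/I_t$, resting on the classical canonical module computation. With these ingredients in hand, the theorem reduces to bookkeeping around known estimates.
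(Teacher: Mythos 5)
Your specialization step is sound and matches what the paper actually does: the displayed determinantal chain is just the abstract chain with ${\rm fpt}(\mm)=m(t-1)$ (Singh--Takagi--Varbaro), $-a(R)=n(t-1)$, and $\dim R=mn-(m-t+1)(n-t+1)=(m+n-t+1)(t-1)$, together with the fact that strong $F$-regularity gives $F$-purity. Note, however, that the paper does not reprove the abstract chain at all: it quotes it from the literature (Theorem B of the De Stefani--N\'u\~nez-Betancourt paper on $F$-thresholds of graded rings and Proposition 1.7 of Musta\c{t}\u{a}--Takagi--Watanabe). You instead attempt to prove it, and there your argument has a genuine gap.

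The gap is the rightmost inequality. Pigeonhole on the degree-one generators of $\mm$ gives $\mm^{N(p^e-1)+1}\subseteq\mm^{[p^e]}$ only with $N$ equal to the \emph{embedding dimension} (the number of those generators), not the Krull dimension; for the determinantal ring itself this is $N=mn$, strictly larger than $\dim R$, and indeed improving on $mn$ is exactly what forces the paper to work modulo $I$. The containment $\mm^{d(p^e-1)+1}\subseteq\mm^{[p^e]}$ with $d=\dim R$ is not a formal pigeonhole fact. A correct elementary route is: choose a homogeneous system of parameters $\theta_1,\dots,\theta_d$ of degree one (after a harmless base field extension); since $R$ is finite over $K[\theta_1,\dots,\theta_d]$ there is a constant $s$ with $\mm^{r+s}\subseteq(\theta_1,\dots,\theta_d)^r$ for all $r$; pigeonhole on the $d$ elements $\theta_i$ gives $(\theta_1,\dots,\theta_d)^{d(p^e-1)+1}\subseteq(\theta_1^{p^e},\dots,\theta_d^{p^e})$, and $\theta_i^{p^e}\in\mm^{[p^e]}$ because in characteristic $p$ the $p^e$-th power of a linear form is a $K$-combination of $p^e$-th powers of the variables; hence $\nu_{\mm}^{\mm}(p^e)\le d(p^e-1)+s$ and $c^{\mm}(\mm)\le\dim R$. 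Your sketch of the middle inequality $-a(R)\le c^{\mm}(\mm)$ names the right ingredients ($F$-injectivity of the Frobenius action on $H^d_{\mm}(R)$ and a degree count on a \v{C}ech representative), but the crucial step is glossed over: non-vanishing of $F^e(\xi)$ naturally yields non-membership in Frobenius powers of the parameter ideal $(\theta_1,\dots,\theta_d)$, and since $(\theta_1^{p^e},\dots,\theta_d^{p^e})\subseteq\mm^{[p^e]}$ this containment points the wrong way for concluding non-membership in $\mm^{[p^e]}$; turning the socle class into an actual witness for $\nu_{\mm}^{\mm}(p^e)\ge -a(R)p^e+O(1)$ is precisely the content of the cited Theorem B. Either carry out that construction in detail or, as the paper does, simply cite the references for the whole chain.
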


\noindent
The invariant ${\rm fpt}(\mathfrak{m})$ is the \emph{$F$-pure threshold} of $R$, introduced in \cite{TAKAGI2004278}. Its computation for determinantal rings is given by Singh, Takagi and Varbaro in \cite[Prop. 4.3]{singh2016gorenstein}. The chain of inequalities, where $a(R)$ is the \emph{a-invariant} of $R$, are proved in \cite[Thm B]{article} and \cite[Prop 1.7]{Musta2004FthresholdsAB}.

In the following we state our main results. We focus on the case of maximal minors, that is the case $t=m$ of Problem \ref{prob}. We give a new sharper upper bound for the $F$-threshold and we prove that it is equal to the $a$-invariant in the cases $t=m\in\{3,4\}$.

\begin{thm}\label{theorem upper bound}
    Let $R=S/I_t(X)$, where $X$ is a $n\times m$ matrix of indeterminates and $t=m\leq n$. Then for $n\geq 3$ we have
    \[  c^{\mm}(\mm)\leq  n\left(t-\frac{1}{2}\right)  .
  \]
   
\end{thm}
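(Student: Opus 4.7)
The strategy is to bound $\nu^{\mathfrak{m}}_{\mathfrak{m}}(p^e)$ uniformly by $n(t-\frac{1}{2})p^e + C$ with $C = C(n,t)$ independent of $e$, and then divide by $p^e$ and let $e \to \infty$. Writing $q = p^e$, by definition $\nu^{\mathfrak{m}}_{\mathfrak{m}}(q)$ equals the largest degree of a monomial $\mu = \prod_{i,j} x_{ij}^{a_{ij}} \in S$ whose image in $R$ lies outside $\mathfrak{m}^{[q]}$, equivalently $\mu \notin \mathfrak{m}^{[q]} + I_t(X)$ in $S$. Such a $\mu$ automatically satisfies $a_{ij}\le q-1$ for every $(i,j)$, since otherwise $\mu \in \mathfrak{m}^{[q]}$; the problem reduces to showing $\sum_{i,j} a_{ij} \le n(t-\frac{1}{2})q + C$.

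The only available tool to reduce $\mu$ modulo $I_t(X)$ is the Laplace expansion of the maximal minors. For each choice of $t$ rows $i_1<\cdots<i_t$, the vanishing of $[i_1,\ldots,i_t]$ in $R$ provides a straightening move that replaces, inside $\mu$, a factor $\prod_k x_{i_k,k}$ by a signed sum of $\prod_k x_{i_k,\sigma(k)}$ over $\sigma\ne\mathrm{id}$. This fixes the exponents outside rows $\{i_1,\ldots,i_t\}$ and columns $\{1,\ldots,t\}$ while permuting them across those positions; if a move raises an exponent already at $q-1$ up to $q$, that summand lands in $\mathfrak{m}^{[q]}$ and disappears. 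The combinatorial plan is to iterate such moves, controlled by a weight on $\mu$ whose exhaustion forces every surviving summand into $\mathfrak{m}^{[q]}$.

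Setting $R_i = \sum_j a_{ij}$ for the row sums, I would package this counting as a \emph{pairwise} bound: for every pair of distinct rows $\{i,i'\}$ in the support of $\mu$, one should have $R_i+R_{i'} \le (2t-1)q + O(1)$ whenever $\mu \notin \mathfrak{m}^{[q]} + I_t(X)$. Summing over all $\binom{n}{2}$ pairs counts each $R_i$ exactly $n-1$ times, giving $(n-1)\sum_i R_i \le \binom{n}{2}(2t-1)q + O(1)$, which rearranges precisely to $\sum_{i,j}a_{ij} \le n(t-\frac{1}{2})q + O(1)$. Dividing by $q$ and letting $e\to\infty$ then yields $c^{\mathfrak{m}}(\mathfrak{m}) \le n(t-\frac{1}{2})$. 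The main obstacle is establishing the pairwise bound itself: each Laplace substitution through $\{i,i'\}$ also activates $t-2$ auxiliary rows and can spill degree into them, potentially inflating the analogous bound for other pairs. The hard part will be to organize the straightening moves into a well-ordered sequence — likely by induction on $R_i+R_{i'}$ together with a balanced rotation of the auxiliary rows across successive substitutions — so that the spillage averages out when summed over all $\binom{n}{2}$ pairs.
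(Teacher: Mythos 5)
Your reduction of the theorem to the pairwise inequality $R_i+R_{i'}\le (2t-1)q+O(1)$ is arithmetically sound (summing over pairs does give $\deg\mu\le n\left(t-\tfrac12\right)q+O(1)$), but the pairwise inequality itself is not only left unproved — as you acknowledge — it is in fact \emph{false} for $t\ge 3$. Take the monomial $\mu=\prod_{j=1}^{t}x_{1j}^{\,q-1}x_{2j}^{\,q-1}$, supported on exactly two rows with every exponent equal to $q-1$. Applying the retraction of $S$ onto $K[x_{1j},x_{2j}: 1\le j\le t]$ that sends all variables in the remaining rows to $0$ kills every generator of $I_t(X)$, because each $t\times t$ minor uses $t\ge 3$ distinct rows and hence at least one row outside $\{1,2\}$, so every one of its monomials contains a variable mapped to $0$. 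Consequently, if $\mu$ were in $\mathfrak{m}^{[q]}+I_t(X)$ its image would lie in $(x_{1j}^{q},x_{2j}^{q}:j)$, which it does not; so $\mu\notin\mathfrak{m}^{[q]}+I_t(X)$, yet $R_1+R_2=2t(q-1)=(2t-1)q+(q-2t)$ exceeds $(2t-1)q+C$ as soon as $q>2t+C$. So no organization of straightening moves can rescue a bound stated for \emph{every} pair of occupied rows, and the summation step collapses with it. (The example is harmless for the theorem itself, since $2t(q-1)<n\left(t-\tfrac12\right)q$ for $n\ge 3$, but it shows the local-to-global scheme must use larger row sets.)

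The paper's proof works at the level of $t$-element subsets of rows rather than pairs, and in the opposite logical direction: by averaging the exponent sums over all $\binom{n}{t}$ choices of $t$ rows, some $t\times t$ submatrix carries at least $\tfrac{t}{n}\deg g$ of the degree; if $\deg g\ge n\left(t-\tfrac12\right)(k-1)+1$ this submonomial has degree at least $\left(t^2-\lfloor t/2\rfloor-1\right)(k-1)+1$, and the square-case result (Theorem \ref{partial result square}, proved via $d$-representations modulo the single determinant of that submatrix and the notion of $k$-special diagonal) puts it — hence $g$ — into $(\mathfrak{m}^{[k]},I_t)$. Note that the genuine content is precisely that square-case bound, which your proposal also does not supply; the counterexample above indicates that any correct local inequality must involve all $t$ rows of a maximal minor, since fewer rows give no access to $I_t$ at all.
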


\begin{thm}\label{thm n=3,4}
    Let $R=S/I_t(X)$, where $X$ is a $n\times m$ matrix of indeterminates and $t=m\leq n$. Then for $t=3,4$ we have  
    \[    c^{\mm}(\mm)=n(t-1). \]
\end{thm}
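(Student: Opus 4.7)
The lower bound $c^\mm(\mm) \geq n(t-1)$ is already provided by Theorem \ref{lowerbound}, so the content of Theorem \ref{thm n=3,4} is the reverse inequality $c^\mm(\mm) \leq n(t-1)$ in the two cases $t = 3$ and $t = 4$. Since Theorem \ref{theorem upper bound} only yields $c^\mm(\mm) \leq n(t-\tfrac{1}{2})$, we must sharpen its combinatorial argument by an additional $n/2$ in the exponent when $t$ is small.

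The plan is as follows. For each $e \geq 1$ it suffices to exhibit an integer $r_e$ with $r_e/p^e \to n(t-1)$ and $\mm^{r_e} \subseteq \mm^{[p^e]}$ in $R$; lifting to $S$, this amounts to showing that every degree $r_e$ monomial in the $x_{ij}$ lies in $(x_{ij}^{p^e} : i,j) + I_t(X)$. Such a monomial is automatically in the Frobenius power if some exponent reaches $p^e$, so the interesting case is when all exponents sit in $[0, p^e-1]$. The proof of Theorem \ref{theorem upper bound} uses the generators of $I_t$ to shuffle exponents across the grid until some variable is pushed past $p^e$. The idea for $t \in \{3, 4\}$ is that the small number of columns allows a tighter version of this shuffling: on any monomial of total degree strictly above $n(t-1) p^e$ with every exponent below $p^e$, the joint distribution of row and column degrees forces enough $t \times t$ diagonal patterns of $X$ to occur inside the support that iterated Plücker-type straightening modulo $I_t$ consumes exactly the extra $n/2$ in the exponent gap.

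The main obstacle is the case analysis on column-degree vectors. For $t = 3$ there are only three columns but the exponents may concentrate along a few rows, and the straightening has to be performed carefully to avoid creating new configurations that are hard to reduce; for $t = 4$ the casework is markedly richer and several distribution shapes require a separate treatment. The reason this route does not extend to all $t$ — and why the paper leaves the equality as a conjecture in general — is that the number of usable $t \times t$ patterns inside a monomial of degree only $n(t-1) p^e$ does not scale favourably with $t$, so the clean saving of $n/2$ ceases to be visible from a purely combinatorial count once $t \geq 5$.
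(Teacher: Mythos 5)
Your framing of the problem matches the paper's: the lower bound $n(t-1)=-a(R)\le c^\mm(\mm)$ comes from Theorem \ref{lowerbound}, and the whole content is the upper bound, i.e.\ showing that every monomial of degree roughly $n(t-1)p^e$ with all exponents below $p^e$ lies in $(\mm^{[p^e]},I_t)$. Beyond that framing, however, the proposal contains no proof. Its entire substance is the sentence asserting that ``iterated Pl\"ucker-type straightening modulo $I_t$ consumes exactly the extra $n/2$ in the exponent gap,'' and nothing is offered to substantiate it: you do not say which minors are used, what decreases at each straightening step, why the process terminates, or why the row/column degree distribution of a monomial of degree above $n(t-1)p^e$ forces some exponent past $p^e$ after reduction. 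You explicitly defer ``the case analysis on column-degree vectors,'' which is precisely where all the difficulty lives, so what you have is a plan rather than a proof.

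For comparison, the paper's argument has two ingredients, both absent from your proposal. First, a counting argument (Proposition \ref{t=m}): each entry of the $n\times t$ exponent matrix lies in $\binom{n-1}{t-1}$ of the $\binom{n}{t}$ maximal $t\times t$ submatrices, so some $t\times t$ submatrix carries at least $\frac{t}{n}\deg g$ of the degree; one then works modulo the single determinant of the corresponding submatrix, which lies in $I_t$. This reduction to the square case works for \emph{all} $t$, which is why Conjecture \ref{conj F-threshold} reduces to the square-case Conjecture \ref{squareconj} --- contrary to your closing explanation, the obstruction for $t\ge 5$ is not the rectangular count but the square case itself. Second, the genuinely hard square-case statements (Theorems \ref{sol3} and \ref{sol4}): every monomial of degree $t(t-1)(k-1)+1$ in the $t\times t$ case lies in $(\mm^{[k]},f)$, proved via $d$-representations, sets of minimal intersection with a diagonal, and $k$-special diagonals (Proposition \ref{specialin}), with substantial casework for $t=4$. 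Your proposal acknowledges that such casework is needed but carries out none of it, so the key combinatorial mechanism converting ``degree at least $n(t-1)(p^e-1)+1$'' into membership in $(\mm^{[p^e]},I_t)$ is missing; this is a genuine gap, not a stylistic difference.
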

 Both proofs of Theorem \ref{theorem upper bound} and \ref{thm n=3,4} rely on the case $t=m=n$. This case is solved using a combinatorial approach to compute $ \nu_{\mathfrak{m}}^{\mathfrak{m}}(p^e)$ in a way that is independent of the characteristic of the field. For values of $t$ greater than $4$ we conjecture that the same formula as in Theorem \ref{thm n=3,4} holds.

\begin{conjecture} \label{conj F-threshold}
    In the context of Theorem \ref{theorem upper bound}, $c^{\mm}(\mm)=n(t-1)$ for all $t,n\geq 5$.
\end{conjecture}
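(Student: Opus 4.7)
The plan is to reduce the conjecture to an upper-bound statement in the square case, since the lower bound $c^{\mm}(\mm) \geq n(t-1)$ is already supplied by Theorem~\ref{lowerbound} via $-a(R) = n(t-1)$. Following the architecture of Theorem~\ref{thm n=3,4}, I would first concentrate on the square case $t = m = n$, in which $I_t = (\det X)$ is principal and $R = S/(\det X)$ is a hypersurface. Once the square case yields $c^{\mm}(\mm) = n(n-1)$, the rectangular case $t = m \leq n$ should follow by the same Frobenius-power bookkeeping already developed for $t \in \{3,4\}$: one compares the $F$-threshold of the $n \times t$ determinantal ring with that of a complete intersection cut out by block-diagonal $t \times t$ determinants on disjoint sets of variables, exploiting the fact that $c^{\mm}(\mm)$ is additive on tensor products of standard graded algebras.

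For the square case, the plan is to compute $\nu_{\mm}^{\mm}(p^e)$ combinatorially, independently of the characteristic of $K$. Encode a monomial $\prod x_{ij}^{a_{ij}}$ by its exponent matrix $A = (a_{ij}) \in \N^{n \times n}$: membership in $\mm^{[p^e]}$ means some $a_{ij} \geq p^e$, while the total degree is $\sum a_{ij}$. The task reduces to proving that whenever $0 \leq a_{ij} \leq p^e - 1$ for all $i, j$ and $\sum a_{ij} > n(n-1)p^e$, the monomial $x^A$ already lies in $(\det X)$ modulo $\mm^{[p^e]}$. The natural tool is to use the Laplace expansion $\det X = \sum_{\sigma \in S_n} \sgn(\sigma) \prod_i x_{i, \sigma(i)}$ as a local rewriting rule: an extremal combinatorial lemma should guarantee that any such ``too large'' exponent matrix admits a permutation $\sigma$ along which every $a_{i, \sigma(i)}$ is positive, enabling a reduction modulo $\det X$ that pushes some entry above $p^e - 1$ and so places the monomial into $\mm^{[p^e]}$.

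The hardest step, and precisely the one blocking the general case, is this extremal combinatorial statement. For $t \in \{3, 4\}$ the number of permutation patterns in $S_n$ is small enough to sustain a direct case analysis, but for larger $t$ a uniform mechanism is required --- for instance, an induction on $n$ obtained by pivoting on an entry of value close to $p^e - 1$ and passing to an $(n-1) \times (n-1)$ minor, or a pigeonhole estimate asserting that a matrix with total sum exceeding $n(n-1)p^e$ must contain a perfect matching of entries bounded below by a positive constant independent of $n$. Designing such a uniform argument, possibly through the Gröbner-basis description of the standard-monomial basis of $S/(\det X)$ or through an identification with a well-studied extremal problem on integer matrices with bounded entries, is in my view the missing ingredient and the main technical obstacle to settling the conjecture for all $t \geq 5$.
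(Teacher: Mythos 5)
There is a genuine gap here, and in fact there has to be: the statement you are addressing is stated in the paper as a \emph{conjecture}, and the paper itself offers no proof of it (its own results stop at the weaker upper bound $c^\mm(\mm)\le n\left(t-\tfrac12\right)$ of Theorem \ref{theorem upper bound}, obtained from Theorem \ref{partial result square} via the averaging argument of Proposition \ref{t=m}). Your text is a research plan rather than a proof, and you say so yourself: the ``extremal combinatorial statement'' that every sufficiently large exponent matrix can be reduced modulo $\det X$ into $\mm^{[p^e]}$ is exactly the open content of Conjecture \ref{squareconj}, so nothing is established. Moreover, the way you describe that step understates the difficulty. A single swap along a positive diagonal does not ``push some entry above $p^e-1$''; one must control \emph{every} monomial produced by an iterated rewriting process, which is precisely why the paper builds the machinery of $d$-representations, minimal-intersection sets and $k$-special diagonals (Proposition \ref{repexists}, Lemma \ref{sumdrepresentation}, Proposition \ref{specialin}), and even with that machinery the general square case only yields the exponent $n^2-\lfloor n/2\rfloor-1$, not $n(n-1)$.

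Your reduction from the rectangular case $t=m\le n$ to the square case also differs from the paper's and is weaker. You propose to dominate $I_t$ by a complete intersection of block-diagonal $t\times t$ determinants on disjoint variables and use additivity of the diagonal $F$-threshold on tensor products. The containment of ideals does give the right inequality direction, but the block decomposition exists only when $t\mid n$; if $r=n\bmod t>0$, the leftover $rt$ variables contribute a polynomial-ring factor and the bound degrades to $n(t-1)+r$, which does not prove the conjectured value. The paper instead uses the counting argument of Proposition \ref{t=m}: averaging the entry sums over all $\binom{n}{t}$ choices of $t$ rows produces a single $t\times t$ minor carrying at least a $\tfrac{t}{n}$ fraction of the degree, to which the square-case statement is applied; this yields $n(t-1)$ for every $n\ge t$ and shows, as the paper notes, that Conjecture \ref{conj F-threshold} would follow from Conjecture \ref{squareconj} alone. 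So the correct assessment is: the lower bound $n(t-1)\le c^\mm(\mm)$ is indeed already available from Theorem \ref{lowerbound}, your overall architecture (square case first, then a reduction) mirrors the paper's, but the decisive square-case upper bound for $t\ge 5$ is missing in both your proposal and the paper, and your rectangular reduction should be replaced by the minor-averaging argument to cover all $n$.
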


This Conjecture is based on the assumption that $\nu^\mathfrak{m}_\mathfrak{m}(p^e) = t(t-1)(p^e-1)$ if $t=m=n$, which we proved for $t=3,4$ to get Theorem \ref{thm n=3,4}. 

The structure of the paper is as follows: in Section 2 we state the problem and explain the general context of our approach. Then, in Section 3 we describe the combinatorial framework and give the main results for the square case $t=m=n$. Finally, using these results, in Section 4 we prove Theorem \ref{theorem upper bound} and \ref{thm n=3,4}.

\section{Set up} 

As mentioned before, our approach does not rely on any assumption on the base field $K$. In particular, the characteristic $p$ does not play any role in the results presented here. Indeed, we study the following, more general problem.

\begin{problem}\label{altprob}
	Let $K$ be a field, $S=K[X]$, where
	\[  X=\begin{pmatrix}
		x_{11} & \dots & x_{1m} \\
		\vdots & \ddots & \vdots \\
		x_{n1} & \dots & x_{nm}
	\end{pmatrix}\]
	is a $n\times m$ matrix of variables, and let $I=I_t(X)$ be the ideal generated by $t \times t$ minors of $X$, with $t \le m \le n$. For a fixed positive integer $k \in \mathbb{Z}^+$ consider the generalized Frobenius power $\mathfrak{m}^{[k]}=(x_{11}^k,\dots, x_{nm}^k)$, and compute
	\[ \nu_k(\mm)={\rm max}\{r\in \N | \mm^r\not\subseteq (\mm^{[k]},I) \}. \]
\end{problem}

\noindent
Setting $K$ a field of characteristic $p$, we retrieve $ \nu_{\mm}^{\mm}(p^e)=\nu_{p^e}(\mm)$ and the $F$-threshold of $R=S/I$ as $\displaystyle c^\mm(\mm)=\lim_{e\to\infty} \dfrac{\nu_{p^e}(\mm)}{p^e}$.

Moreover, since $\nu_k(\mm)$ is equal to the maximum degree $r$ of a monomial $g \in K[X]$ not belonging to $(\mm^{[k]},I)$, we focus on finding conditions ensuring that $g \in (\mm^{[k]},I)$. In order to do so, for a monomial $g=x_{11}^{\alpha_{11}}\ldots x_{nm}^{\alpha_{nm}}$ we consider the \textit{exponent matrix}

\[  M(g) :=\begin{pmatrix}
	\alpha_{11} & \dots & \alpha_{1m} \\
	\vdots & \ddots & \vdots \\
	\alpha_{n1} & \dots & \alpha_{nm}
\end{pmatrix}.\]
Our goal is to study $M(g)$ and find conditions on its entries, which would in turn ensure that the associated monomial $g$ belongs to the ideal $(\mm^{[k]},I)$. However, since these conditions are not satisfied by all monomials of the degree required to compute $\nu_k(\mm)$, we also need to work modulo $I$ by finding a suitable representation of $g$ as a finite sum of monomials, which will then be the object of our study.

The main combinatorial framework is defined under the assumption $t=m=n$ and it is the main tool of the proofs. The results obtained are then extended to the case $t = m \le n$ with a counting argument.

\section{Determinants of square matrices} 

If $t=m=n$, $I_t(X)$ is a principal ideal generated by $f:={\rm det}(X)$, therefore, in order to understand when a monomial $g \in K[x_{11},\ldots,x_{nn}]$ belongs to $(\mm^{[k]},f)$ we need to work modulo $f$. Our intuitive approach is the following: we choose one monomial appearing in $f$ that divides $g$ and substitute it with all the other monomials appearing in $f$ (with the appropriate sign), obtaining a sum of monomials which is equivalent to $g$ modulo $f$. Our aim is to get a "special" representation in this way; for this purpose, we need some auxiliary definitions.

\begin{defn} \label{def diagonal}
	We say that $d\subseteq \{1,\dots,n\} \times \{1,\ldots,n\}$ is a \textit{diagonal} if the elements of $d$ are the indexes of one of the monomials appearing in ${\rm det}(X)$ (then $|d|=n$). We denote the set of all diagonals by $\D$.
\end{defn}

While diagonals are defined in terms of monomials of ${\rm det}(X)$, they can also be viewed in terms of $M(g)$: in fact, diagonals are sets of $n$ elements of $M(g)$ containing exactly one element from each row and column of $M(g)$.

Given a set $A$, the \textit{indicator function} $\mathds{1}_A(x)$ is equal to $1$ if $x \in A$ and $0$ otherwise.
\begin{defn}\label{def swap}
Let $g,g' \in K[x_{11},\ldots,x_{nn}]$ be monomials and let $d,d'$ be diagonals. We say that $g'=x_{11}^{\beta_{11}}\ldots x_{nn}^{\beta_{nn}}$ is obtained from $g=x_{11}^{\alpha_{11}}\ldots x_{nn}^{\alpha_{nn}}$ by a \textit{swap} of $d$ with $d'$ if, for every $i,j \in \{1,\ldots,n\}$, $\beta_{ij}=\alpha_{ij}-\mathds{1}_d(i,j)+\mathds{1}_{d'}(i,j)$.	
\end{defn}

Definition \ref{def swap} has an intuitive visualization in terms of the exponent matrix, since $M(g')$ is obtained from $M(g)$ by reducing by one all entries $\alpha_{ij}$ such that $(i,j) \in d$ and increasing by one all entries such that $(i,j) \in d'$.
The following Proposition enables us to work modulo $f$ and consider a special class of monomials satisfying some useful conditions.

\begin{prop}\label{repexists}
	Let $g \in K[x_{11},\ldots,x_{nn}]$ be a monomial and let $d \in \D$ be a diagonal.
	Then there exists a finite set of (not necessarily distinct) monomials $g_1,\ldots,g_h \in K[x_{11},\ldots,x_{nn}]$, considered with sign, such that:
	\begin{enumerate}
		\item $\deg g_i = \deg g$ for every $i=1,\ldots,h$.
		\item For every $g_i=x_{11}^{\beta_{11}}\ldots x_{nn}^{\beta_{nn}}$ we have $\displaystyle \prod_{(r,q) \in d} \beta_{rq} = 0$;
		\item Every monomial $g_i$ is obtained from $g$ by a chain of swaps of $d$ with other diagonals; 
		\item $g \equiv g_1 + \ldots + g_h \pmod{f}$.
	\end{enumerate}
\end{prop}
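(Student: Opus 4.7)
The plan is to perform an inductive construction, using the determinantal relation modulo $f=\det(X)$ to repeatedly ``clear out'' the diagonal $d$ until no monomial in the representation is divisible by the monomial associated with $d$.

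First I would set up notation. For any diagonal $d'\in\D$ let $m_{d'}=\prod_{(i,j)\in d'}x_{ij}$ and let $\sgn(d')$ denote the sign of the associated permutation, so that $f=\sum_{d'\in\D}\sgn(d')\,m_{d'}$. Isolating the chosen diagonal $d$ gives the fundamental congruence
\[
m_d\;\equiv\;-\sgn(d)\sum_{d'\in\D\setminus\{d\}}\sgn(d')\,m_{d'}\pmod{f}.
\]
Condition (2) in the statement is exactly the assertion that some entry $\beta_{rq}$ with $(r,q)\in d$ vanishes, i.e.\ that $m_d$ does \emph{not} divide $g_i$. So the goal is to produce, via swaps of $d$ with other diagonals, a congruent signed sum of monomials, none of which is divisible by $m_d$.

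Next I would introduce the termination statistic $S(g):=\sum_{(r,q)\in d}\alpha_{rq}$, the sum of the entries of $M(g)$ on the diagonal $d$. The key observation is that if $g'$ is obtained from $g$ by a single swap of $d$ with $d'\neq d$, then for each $(r,q)\in d\cap d'$ the entry is unchanged while for each $(r,q)\in d\setminus d'$ it decreases by one; hence
\[
S(g')\;=\;S(g)\;-\;|d\setminus d'|\;<\;S(g),
\]
and this quantity stays nonnegative provided the swap is legal, i.e.\ provided $m_d\mid g$.

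With this in hand, I would prove the Proposition by strong induction on $S(g)$. In the base case, $m_d$ does not divide $g$, so some $\alpha_{rq}$ with $(r,q)\in d$ equals zero; taking $g_1=g$ (with the empty chain of swaps) satisfies (1)–(4) trivially. In the inductive step $m_d\mid g$, so we can write $g=m_d\cdot h$ for a monomial $h$, and the fundamental congruence yields
\[
g\;\equiv\;-\sgn(d)\sum_{d'\in\D\setminus\{d\}}\sgn(d')\,m_{d'}h\pmod{f}.
\]
Each monomial $m_{d'}h$ is obtained from $g$ by a single swap of $d$ with $d'$, has the same degree as $g$, and satisfies $S(m_{d'}h)<S(g)$. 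Applying the induction hypothesis to each $m_{d'}h$ (with the same choice of diagonal $d$), collecting the resulting signed monomials, and composing the chains of swaps gives the desired representation; (1) is preserved because each step is degree-preserving, (2) is the base-case conclusion, (3) follows by concatenating one swap with the inductive chains, and (4) follows by transitivity of congruence modulo $f$.

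The only genuine subtlety—more bookkeeping than obstacle—is to check that the swap relation composes correctly (so that ``a chain of swaps of $d$ with other diagonals'' remains meaningful after the inductive call, which itself uses $d$ throughout) and that the signs produced by iterated applications of the fundamental congruence line up, so that the final signed sum is really congruent to $g$ modulo $f$. Both points are handled cleanly by keeping $d$ fixed for the whole induction and expanding the sums formally.
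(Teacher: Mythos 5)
Your proposal is correct and follows essentially the same route as the paper: both induct on the diagonal sum $\Sigma_d(g)=\sum_{(r,q)\in d}\alpha_{rq}$, use the congruence $m_d\equiv -\sgn(d)\sum_{d'\neq d}\sgn(d')m_{d'}\pmod f$ to replace the diagonal monomial when it divides $g$, and observe that each swap strictly decreases the statistic because $d'$ cannot contain $d$. The only difference is presentational (your base case is stated as ``$m_d\nmid g$'' rather than ``$\Sigma_d(g)\le n-1$''), which is immaterial.
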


\begin{proof}
	
	For a monomial $g = x_{11}^{\alpha_{11}}\ldots x_{nn}^{\alpha_{nn}}$ define the quantity $\displaystyle \Sigma_{d}(g) = \sum_{(r,q) \in d} \alpha_{rq}$; we proceed by induction on $\Sigma_d(g)$. If $\Sigma_d(g) \in \{0,1,\dots,n-1\}$, since $|d|=n$ there exists $(r,q) \in d$ such that $\alpha_{rq} = 0$: therefore, $g$ itself satisfies the conditions listed in the thesis, and thus the claim is proved.
	
	Assume then that our thesis is true for all monomials $g'$ such that $\Sigma_d(g') < h$, and consider a monomial $g = x_{11}^{\alpha_{11}}\ldots x_{nn}^{\alpha_{nn}}$ such that $\Sigma_d(g) = h$. Like before, if there exists $(r,q) \in d$ such that $\alpha_{rq} = 0$ then $g$ itself satisfies the conditions in the thesis; assume therefore that $\displaystyle \prod_{(r,q) \in d} \alpha_{rq} > 0$. Then $\displaystyle \prod_{(r,q) \in d} x_{rq}$ divides $g$, and thus, working modulo $f$, we can replace the monomial $\displaystyle \prod_{(r,q) \in d} x_{rq}$ in $g$ with all other $n!-1$ monomials appearing in $f$. Thus we obtain an equivalence
	\[ g \equiv g_1+\dots+g_{n!-1} \pmod{f},\] 
	where each $g_i$ is considered with the appropriate sign.
	By definition of diagonal every other monomial appearing in $f$ is of the form  $\{x_{rq}\}_{(r,q) \in d'}$ for some diagonal $d' \neq d$: therefore, every monomial $g_i$ is obtained by a swap of $d$ with another diagonal $d'$, and  $\displaystyle \Sigma_d(g_i) = \Sigma_d(g) - n + \sum_{(r,q) \in d} \mathds{1}_{d'}(r,q) <  h$ since $d'$ cannot contain $d$. Then every $g_i$ satisfies the induction hypothesis, and is thus equivalent modulo $f$ to a linear combination of monomials satisfying the conditions listed. Finally, since every $g_i$ is obtained by a swap of $d$ with another diagonal $d'$, $g$ can be written as a linear combination of monomials satisfying the conditions listed. 
\end{proof}

\begin{defn} \label{def.d_representation_n}
	Let $g \in K[x_{11},\ldots,x_{nn}]$ be a monomial, and let $d \in \D$ be a diagonal. A \textbf{$d$-represenation} for $g$ is an equivalence of the form $g \equiv g_1 + \ldots + g_h \pmod{f}$, where $g_1,\ldots,g_h \in K[x_{11},\ldots,x_{nn}]$ are monomials (considered with sign) such that, for every $i=1,\ldots,h$, $\deg g_i = \deg g$, $g_i=x_{11}^{\beta_{11}}\ldots x_{nn}^{\beta_{nn}}$ is obtained by a chain of swaps of $d$ with other diagonals, and satisfies $\displaystyle \prod_{(r,q) \in d} \beta_{rq} = 0$.
\end{defn}
 
\begin{example}\label{example}
	Let $n=3$, and consider $g=x_{11}^7 x_{12}^4 x_{13}^3 x_{21}^5 x_{22}^2 x_{23} x_{31}^3 x_{32}^2 x_{33}^2$ and $d=\{(1,1),(2,2),(3,3)\}$. Using the algorithm described in the proof of Proposition \ref{repexists} we compute a $d$-representation for $g$. The following list, computed with GAP (\cite{GAP2022}), describes the exponent matrices of all monomials appearing in the desired $d$-representation.
	
	\begin{scriptsize}

		\begin{verbatim}
		[ [ [ 7, 4, 3 ], [ 5, 0, 3 ], [ 3, 4, 0 ] ], [ [ 6, 5, 3 ], [ 6, 0, 2 ], [ 3, 3, 1 ] ],
		[ [ 6, 5, 3 ], [ 5, 0, 3 ], [ 4, 3, 0 ] ], [ [ 6, 4, 4 ], [ 6, 0, 2 ], [ 3, 4, 0 ] ],
		[ [ 6, 4, 4 ], [ 5, 1, 2 ], [ 4, 3, 0 ] ], [ [ 6, 5, 3 ], [ 6, 0, 2 ], [ 3, 3, 1 ] ],
		[ [ 5, 6, 3 ], [ 7, 0, 1 ], [ 3, 2, 2 ] ], [ [ 5, 6, 3 ], [ 6, 0, 2 ], [ 4, 2, 1 ] ],
		[ [ 5, 5, 4 ], [ 7, 0, 1 ], [ 3, 3, 1 ] ], [ [ 5, 5, 4 ], [ 6, 0, 2 ], [ 4, 3, 0 ] ],
		[ [ 4, 6, 4 ], [ 7, 0, 1 ], [ 4, 2, 1 ] ], [ [ 4, 6, 4 ], [ 6, 0, 2 ], [ 5, 2, 0 ] ],
		[ [ 4, 5, 5 ], [ 7, 0, 1 ], [ 4, 3, 0 ] ], [ [ 4, 5, 5 ], [ 6, 1, 1 ], [ 5, 2, 0 ] ],
		[ [ 6, 5, 3 ], [ 5, 0, 3 ], [ 4, 3, 0 ] ], [ [ 5, 6, 3 ], [ 6, 0, 2 ], [ 4, 2, 1 ] ],
		[ [ 5, 6, 3 ], [ 5, 0, 3 ], [ 5, 2, 0 ] ], [ [ 5, 5, 4 ], [ 6, 0, 2 ], [ 4, 3, 0 ] ],
		[ [ 5, 5, 4 ], [ 5, 1, 2 ], [ 5, 2, 0 ] ], [ [ 6, 4, 4 ], [ 6, 0, 2 ], [ 3, 4, 0 ] ],
		[ [ 5, 5, 4 ], [ 7, 0, 1 ], [ 3, 3, 1 ] ], [ [ 5, 5, 4 ], [ 6, 0, 2 ], [ 4, 3, 0 ] ],
		[ [ 5, 4, 5 ], [ 7, 0, 1 ], [ 3, 4, 0 ] ], [ [ 5, 4, 5 ], [ 6, 1, 1 ], [ 4, 3, 0 ] ],
		[ [ 6, 4, 4 ], [ 5, 1, 2 ], [ 4, 3, 0 ] ], [ [ 5, 5, 4 ], [ 6, 0, 2 ], [ 4, 3, 0 ] ],
		[ [ 4, 6, 4 ], [ 7, 0, 1 ], [ 4, 2, 1 ] ], [ [ 4, 6, 4 ], [ 6, 0, 2 ], [ 5, 2, 0 ] ],
		[ [ 4, 5, 5 ], [ 7, 0, 1 ], [ 4, 3, 0 ] ], [ [ 4, 5, 5 ], [ 6, 1, 1 ], [ 5, 2, 0 ] ],
		[ [ 5, 5, 4 ], [ 5, 1, 2 ], [ 5, 2, 0 ] ], [ [ 5, 4, 5 ], [ 6, 1, 1 ], [ 4, 3, 0 ] ],
		[ [ 5, 4, 5 ], [ 5, 2, 1 ], [ 5, 2, 0 ] ] ]
		\end{verbatim}
	\end{scriptsize}
	Notice that the same monomial appears more than once in this $d$-representation of $g$ (and since each monomial is considered with sign, it is indeed possible that there are some cancellations). For instance, consider the monomial $g'=x_1^5x_2^5x_3^4x_4^5x_5x_6^2x_7^5x_8^2$, which exponent matrix $\begin{pmatrix} 5 & 5 & 4 \\ 5 & 1 & 2 \\ 5 & 2 & 0 \end{pmatrix}$ appears twice in the list above. In fact, this matrix can be obtained from $M(g)$ with the chain of swaps (where in every step entries in $d$ are decreased by one, while the diagonal highlighted in red is the one which has been increased by one)
	
	$$\begin{pmatrix} 7 & 4 & 3 \\ 5 & 2 & 1 \\ 3 & 2 & 2 \end{pmatrix} \rightarrow \begin{pmatrix} 6 & \color{red}{5} & 3 \\ 5 & 1 & \color{red}{2} \\ \color{red}{4} & 2 & 1 \end{pmatrix} \rightarrow \begin{pmatrix} 5 & 5 & \color{red}{4} \\ 5 & \color{red}{1} & 2 \\ \color{red}{5} & 2 & 0 \end{pmatrix}$$ 
	
		and 
	
	$$\begin{pmatrix} 7 & 4 & 3 \\ 5 & 2 & 1 \\ 3 & 2 & 2 \end{pmatrix} \rightarrow \begin{pmatrix} 6 & 4 & \color{red}{4} \\ 5 & \color{red}{2} & 1 \\ \color{red}{4} & 2 & 1 \end{pmatrix} \rightarrow \begin{pmatrix} 5 & \color{red}{5} & 4 \\ 5 & 1 & \color{red}{2} \\ \color{red}{5} & 2 & 0 \end{pmatrix}.$$
\end{example}

Despite the fact that there may be some repetitions in $d$-representations, as the previous Example shows, we choose to discard all signs and consider these repetitions as distinct monomials. Our aim is to prove that, under certain conditions, every monomial appearing in a $d$-representation, regardless of cancellation, belongs to $(\mm^{[k]},f)$. This is done by studying some specific sets of entries of $M(g)$. 
\begin{defn}\label{def minimalsets}
	Let $M$ be a $n \times n$ matrix and $d \in \D$ a diagonal. We say that a set $S \subseteq \{1,\ldots,n\} \times \{1,\ldots,n\}$ has \textit{minimal intesection with $d$} if for any other diagonal $d'$ we have $|S \cap d| \le |S \cap d'|$.
\end{defn}

While diagonals and sets with minimal intersections are defined as sets of indices, we will sometimes identify them by their entries on specific matrices, for a more immediate visualization.
\begin{remark}\label{minimalsets}
	\begin{enumerate}
		\item Since each row and column intersects every diagonal exactly once, rows and columns have minimal intersection with every diagonal. 
		\item For a fixed diagonal $d$, minors $T$ of size $n - 1$ such that $|T \cap d | = n-2$ have minimal intersection with $d$, as clearly from the definition of diagonal we have $|T \cap d'| \ge n-2$ for every $d' \in \D$.
	\end{enumerate}
\end{remark}

\begin{defn} \label{kprespecial_n=m=t}
	Let $k$ be a positive integer, $d \in \D$ a diagonal, and $g = x_{11}^{\alpha_{11}}\ldots x_{nn}^{\alpha_{nn}}$ a monomial. We say that $d$ is \textbf{$k$-special} for $g$ if, for every $(i,j) \in d$ there exists a set $S \subseteq \{1,\ldots,n\} \times \{1,\ldots,n\}$ containing $(i,j)$ and having minimal intersection with $d$, such that $$\sum_{(r,q) \in S} \alpha_{rq} \ge (|S|-1)(k-1)+1$$ 
\end{defn}

The following Lemmas will allow us to translate the $k$-special property to properties of $d$-representations.
\begin{lemm}\label{sumswap}
	Let $g,g' \in K[x_{11},\ldots,x_{nn}]$ be monomials, and let $M(g)=(\alpha_{ij})$ and $M(g')=(\beta_{ij})$ be their exponent matrices. Let $d \in \D$ be a diagonal, and $S$ a set having minimal intersection with $d$. Assume that $g'$ is obtained from $g$ by a swap of $d$ with another diagonal $d'$. Then
	
	$$\sum_{(r,q) \in S} \alpha_{rq} \le \sum_{(r,q) \in S} \beta_{rq},$$
	
	and equality holds if $S$ has minimal intersection with both $d$ and $d'$.
\end{lemm}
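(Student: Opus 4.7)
The plan is to prove the inequality by a direct computation of the difference
\[
\sum_{(r,q) \in S} \beta_{rq} - \sum_{(r,q) \in S} \alpha_{rq},
\]
using the definition of a swap given in Definition \ref{def swap}, and then to invoke the minimal intersection property of $S$ with $d$.

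First I would substitute $\beta_{rq} = \alpha_{rq} - \mathds{1}_d(r,q) + \mathds{1}_{d'}(r,q)$ into the sum on the left-hand side. The $\alpha_{rq}$ terms cancel, and since $\sum_{(r,q) \in S} \mathds{1}_d(r,q) = |S \cap d|$ and analogously for $d'$, the difference above reduces to
\[
|S \cap d'| - |S \cap d|.
\]
By Definition \ref{def minimalsets}, since $S$ has minimal intersection with $d$ we have $|S \cap d| \le |S \cap d'|$, so the displayed difference is nonnegative, yielding the claimed inequality.

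For the equality statement, note that if $S$ also has minimal intersection with $d'$, then $|S \cap d'| \le |S \cap d|$ holds as well, forcing $|S \cap d| = |S \cap d'|$ and hence the equality of the two sums.

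There is no real obstacle here: the lemma is essentially a bookkeeping observation that translates the combinatorial property of minimal intersection (Definition \ref{def minimalsets}) into a monotonicity statement for the partial sums over $S$ under a single swap. The only care needed is making sure the signs in the indicator-function expansion are tracked correctly, which is immediate from the definition of a swap.
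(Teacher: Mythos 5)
Your proof is correct and follows essentially the same route as the paper: both substitute the swap relation $\beta_{rq}=\alpha_{rq}-\mathds{1}_d(r,q)+\mathds{1}_{d'}(r,q)$, reduce the difference of the two sums to $|S\cap d'|-|S\cap d|$, and conclude by the definition of minimal intersection, with the equality case handled exactly as you describe.
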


\begin{proof}
	
By definition of swap we have $\beta_{ij}=\alpha_{ij}-\mathds{1}_d(i,j)+\mathds{1}_{d'}(i,j)$ for every $i,j$, therefore  	$$\sum_{(r,q) \in S} \beta_{rq} = \left(\sum_{(r,q) \in S} \alpha_{rq}\right) - |S \cap d| + |S \cap d'| \ge \sum_{(r,q) \in S} \alpha_{rq}$$

and the thesis follows.
\end{proof}

\begin{lemm}\label{sumdrepresentation}
	Let $g \in K[x_{11},\ldots,x_{nn}]$ be a monomial and $M(g)=(\alpha_{ij})$ be its exponent matrix. Let $d$ be a diagonal, and let $g'$ be a monomial in a $d$-representation of $g$, having exponent matrix $M(g')=(\beta_{ij})$.
	
	Then for every set $S$ having minimal intersection with $d$ we have
	
	$$\sum_{(r,q) \in S} \alpha_{rq} \le \sum_{(r,q) \in S} \beta_{rq}.$$
	
	Furthermore, equality holds if $S$ has minimal intersection with every diagonal.
\end{lemm}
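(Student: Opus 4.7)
The plan is to proceed by induction on the length of the chain of swaps relating $g$ to $g'$. By Definition \ref{def.d_representation_n}, every monomial $g'$ appearing in a $d$-representation of $g$ is obtained from $g$ by a finite chain of swaps of $d$ with other diagonals; let $\ell$ denote the length of this chain. The base case $\ell = 0$ is trivial, as then $g' = g$ and the two sums in the statement are equal.

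For the inductive step, assume the claim holds for all monomials obtainable from $g$ by a chain of swaps of length at most $\ell - 1$. Let $g'$ be obtained from $g$ by a chain of $\ell$ swaps, and let $g''$ be the monomial occurring at step $\ell - 1$ of the chain, with exponent matrix $M(g'') = (\gamma_{ij})$; thus $g'$ is obtained from $g''$ by a single swap of $d$ with some diagonal $d'$. Applying the inductive hypothesis to $g''$, we obtain
\[ \sum_{(r,q) \in S} \alpha_{rq} \;\le\; \sum_{(r,q) \in S} \gamma_{rq}. \]
Applying Lemma \ref{sumswap} to the single swap taking $g''$ to $g'$ (with $S$ having minimal intersection with $d$ by hypothesis) yields
\[ \sum_{(r,q) \in S} \gamma_{rq} \;\le\; \sum_{(r,q) \in S} \beta_{rq}, \]
and concatenating the two inequalities gives the desired bound.

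For the equality statement, assume that $S$ has minimal intersection with every diagonal. Then $S$ has minimal intersection with $d$ and also with every intermediate diagonal $d'$ used in the chain of swaps, so the equality case of Lemma \ref{sumswap} applies to each individual swap along the chain. By induction, all inequalities above become equalities, giving $\sum_{(r,q) \in S} \alpha_{rq} = \sum_{(r,q) \in S} \beta_{rq}$.

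This reduction is essentially mechanical once Lemma \ref{sumswap} is in hand, so I do not anticipate any substantive obstacle; the only point requiring mild care is making sure the induction is set up on the chain length (rather than, say, on $\Sigma_d(g)$ as in Proposition \ref{repexists}), since $d$-representations are defined directly in terms of such chains.
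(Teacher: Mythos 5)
Your proof is correct and is essentially the paper's argument: the paper also derives the lemma by applying Lemma \ref{sumswap} at each step of the chain of swaps, with equality at every step when $S$ has minimal intersection with every diagonal. You merely make the induction on chain length explicit, which the paper leaves implicit.
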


\begin{proof}
	Since the matrix $M(g')$ is obtained from $M(g)$ by a chain of swaps, the thesis is a direct consequence of Lemma \ref{sumswap}, noticing that if $S$ has minimal intersection with every diagonal, equality holds at each step of the chain of swaps in Lemma \ref{sumswap}.
\end{proof}

\begin{prop}\label{specialin}
	Let $g=x_{11}^{\alpha_{11}}\ldots x_{nn}^{\alpha_{nn}} \in K[x_{11},\ldots,x_{nn}]$ be a monomial, and assume that there exists a diagonal $d \in \D$ which is $k$-special for $g$.
	
	Then $g \in (\mm^{[k]},f)$.
\end{prop}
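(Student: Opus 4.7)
The plan is to start from a $d$-representation of $g$ (given by Proposition \ref{repexists}) and show that each summand in that representation lies in $\mm^{[k]}$; then since the representation is an equivalence modulo $f$, it follows that $g \in (\mm^{[k]},f)$.

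First, write the $d$-representation $g \equiv g_1 + \ldots + g_h \pmod f$ supplied by Proposition \ref{repexists}, and fix one summand $g_i$ with exponent matrix $M(g_i)=(\beta_{rq})$. Property (2) of a $d$-representation gives $\prod_{(r,q) \in d} \beta_{rq} = 0$, so there is at least one index $(i_0,j_0) \in d$ at which $\beta_{i_0 j_0} = 0$. The $k$-special hypothesis on $d$ then provides a set $S$ containing $(i_0,j_0)$, having minimal intersection with $d$, and satisfying
\[
\sum_{(r,q) \in S} \alpha_{rq} \ge (|S|-1)(k-1)+1.
\]

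Next, apply Lemma \ref{sumdrepresentation} to transfer this lower bound from $M(g)$ to $M(g_i)$: since $S$ has minimal intersection with $d$ and $g_i$ is obtained from $g$ by a chain of swaps of $d$ with other diagonals,
\[
\sum_{(r,q) \in S} \beta_{rq} \;\ge\; \sum_{(r,q) \in S} \alpha_{rq} \;\ge\; (|S|-1)(k-1)+1.
\]
Because $\beta_{i_0 j_0} = 0$ and $(i_0,j_0)\in S$, the same bound holds on the restricted sum $\sum_{(r,q)\in S\setminus\{(i_0,j_0)\}} \beta_{rq}$, which is a sum of $|S|-1$ nonnegative integers. By pigeonhole at least one of them must satisfy $\beta_{rq}\ge k$, and hence $x_{rq}^k$ divides $g_i$, so $g_i \in \mm^{[k]}$.

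Repeating this argument for every summand $g_i$ in the $d$-representation yields $g_1,\ldots,g_h \in \mm^{[k]}$, whence $g_1+\ldots+g_h\in\mm^{[k]}$, and therefore $g\in(\mm^{[k]},f)$. The argument is essentially a bookkeeping combination of the three previous results; the only point requiring care is the pigeonhole step, where the exact form $(|S|-1)(k-1)+1$ of the bound is what ensures that removing the single zero entry $\beta_{i_0 j_0}$ still forces one of the remaining $|S|-1$ entries to be at least $k$, rather than merely at least $k-1$.
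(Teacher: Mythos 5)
Your proof is correct and follows essentially the same route as the paper: take a $d$-representation, use the zero entry on $d$ guaranteed by property (2) together with the $k$-special set $S$ and Lemma \ref{sumdrepresentation}, and conclude by the pigeonhole bound $(|S|-1)(k-1)+1$ that some exponent reaches $k$. No gaps.
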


\begin{proof}
	Let $M(g)=(\alpha_{ij})$ be the exponent matrix of $g$, and consider a $d$-representation $g \equiv g_1+\ldots+g_h \pmod{f}$ of $g$. Take a monomial $g'$ appearing in this $d$-representation, and let $M(g')=(\beta_{ij})$ be its exponent matrix. By definition of $d$-representation we have $\beta_{rq}=0$ for some $(r,q) \in d$. Since $d$ is $k$-special for $g$, there exists a set $S \subseteq \{1,\ldots,n\} \times \{1,\ldots,n\}$ containing $(r,q)$ and having minimal intersection with $d$, such that $$\sum_{(i,j) \in S} \alpha_{ij} \ge (|S|-1)(k-1)+1.$$
	
	By Lemma \ref{sumdrepresentation} we have $$\sum_{(i,j) \in S} \beta_{ij} \ge \sum_{(i,j) \in S} \alpha_{ij} \ge (|S|-1)(k-1)+1,$$
	
	and since $\beta_{rq}=0$ there are at most $|S|-1$ non-zero elements among the summands $\beta_{ij}$ with $(i,j) \in S$: therefore, at least one of them has to be at least $k$, and thus $g' \in \mm^{[k]}$. Since this holds for all monomials $g'$ in the $d$-representation of $g$, we obtain $g \in (\mm^{[k]},f)$.
\end{proof}

The previous Proposition is the key result of this work. In the following, we prove that monomials of certain degrees (sometimes under specific conditions) have at least one $k$-special diagonal; therefore, Proposition \ref{specialin} ensures that those monomials belong to $(\mm^{[k]},f)$, and in turn this will give a bound on $c^\mm(\mm)$.

In all the following results, if $g=x_{11}^{\alpha_{11}}\ldots x_{nn}^{\alpha_{nn}}$ is a monomial, we will always assume $\alpha_{ij} \le k-1$ for every $i,j$, since otherwise it is obvious that $g \in (\mm^{[k]},f)$.

\begin{thm} \label{partial result square}
	Let $g=x_{11}^{\alpha_{11}}\ldots x_{nn}^{\alpha_{nn}}$ be a monomial of degree $\left(n^2-  \left \lfloor{\dfrac{n}{2}} \right\rfloor-1 \right)(k-1)+1$. Then $g\in(\mm^{[k]},f)$. 
	
	In particular, $\nu_k(\mm)\leq\left(n^2-  \left \lfloor{\dfrac{n}{2}} \right\rfloor-1 \right)(k-1)$ and $c^\mm(\mm)\leq n^2-  \left \lfloor{\dfrac{n}{2}} \right\rfloor-1$.
\end{thm}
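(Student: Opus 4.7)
The plan is to apply Proposition \ref{specialin}: it suffices to show that every monomial $g = x_{11}^{\alpha_{11}} \cdots x_{nn}^{\alpha_{nn}}$ of the stated degree, with $\alpha_{ij} \le k-1$ for all $i,j$ (the standing reduction noted just before the statement), admits a $k$-special diagonal. The case $k = 1$ is trivial since the bound becomes $\deg g = 1$ and any such $g$ lies in $\mm$, so I will assume $k \ge 2$ throughout.

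First, I would pass to the ``gaps'' $\beta_{ij} := (k-1) - \alpha_{ij} \ge 0$, whose total sum is
\[ \sum_{i,j} \beta_{ij} \;=\; n^2(k-1) - \deg g \;=\; (\lfloor n/2 \rfloor + 1)(k-1) - 1. \]
By Remark \ref{minimalsets}(1) every row and every column has minimal intersection with every diagonal, and for such a set $S$ of cardinality $n$ the $k$-special inequality $\sum_{(r,q)\in S}\alpha_{rq} \ge (n-1)(k-1)+1$ is equivalent to $\sum_{(r,q)\in S}\beta_{rq} \le k-2$. Call a row $i$ (resp. a column $j$) \emph{good} if its gap sum is at most $k-2$, and \emph{bad} otherwise. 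Each bad row or column contributes at least $k-1$ to the total gap, so the numbers $|B_r|$ and $|B_c|$ of bad rows and columns satisfy
\[ |B_r|(k-1) \;\le\; (\lfloor n/2 \rfloor + 1)(k-1) - 1 \;<\; (\lfloor n/2 \rfloor + 1)(k-1), \]
and hence $|B_r| \le \lfloor n/2 \rfloor$, and likewise $|B_c| \le \lfloor n/2 \rfloor$.

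Next I would produce a $k$-special diagonal as a permutation $d$ avoiding the ``bad submatrix'' $B_r \times B_c$; for every $(i,j) \in d$ one then has either $i \notin B_r$ (take $S$ = row $i$) or $j \notin B_c$ (take $S$ = column $j$), both of which witness the $k$-special condition by construction. Existence of such a $d$ follows from Hall's marriage theorem applied to the bipartite graph on $[n] \sqcup [n]$ with edges $\{(i,j) : (i,j) \notin B_r \times B_c\}$: for $A \subseteq [n]$, if $A$ contains some good row $i' \notin B_r$ then $N(A) \supseteq N(\{i'\}) = [n]$, while if $A \subseteq B_r$ then $N(A) = [n] \setminus B_c$, of size at least $n - \lfloor n/2 \rfloor = \lceil n/2 \rceil \ge |A|$. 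With such a $d$ in hand, Proposition \ref{specialin} yields $g \in (\mm^{[k]}, f)$, and the bounds on $\nu_k(\mm)$ and $c^\mm(\mm)$ follow at once. The only delicate point is the sharp counting in the second step: the $-1$ in the total-gap formula is precisely what forces $|B_r|, |B_c| \le \lfloor n/2 \rfloor$ rather than $\lfloor n/2 \rfloor + 1$, and hence what makes Hall's condition hold; note that the $(n-1) \times (n-1)$ minor sets of Remark \ref{minimalsets}(2) are not needed at this stage.
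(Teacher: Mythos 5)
Your proof is correct, and its skeleton is the same as the paper's: use the degree hypothesis (with the standing reduction $\alpha_{ij}\le k-1$) to show that at most $\lfloor n/2\rfloor$ rows and at most $\lfloor n/2\rfloor$ columns fail the bound $\sum \alpha \ge (n-1)(k-1)+1$, then extract a diagonal each of whose entries lies in a good row or a good column, and conclude via Proposition \ref{specialin} and Remark \ref{minimalsets}(1). Your gap-count $\sum_{i,j}\beta_{ij}=(\lfloor n/2\rfloor+1)(k-1)-1$ is just the complementary form of the paper's contradiction argument (which sums the rows directly), so that step is equivalent. Where you genuinely diverge is the extraction of the diagonal: the paper splits into the cases $n$ even and $n$ odd and constructs the diagonal by hand, choosing entries of the $\lceil n/2\rceil$ heavy rows outside the chosen heavy columns and vice versa (a step it states somewhat informally, leaving the ``no two picks share a row or column'' bookkeeping implicit), whereas you apply Hall's marriage theorem to the bipartite graph whose edges avoid the bad-row$\times$bad-column block, with the Hall condition verified directly from $|B_r|,|B_c|\le\lfloor n/2\rfloor$. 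Your route handles both parities uniformly and makes the existence of the required permutation fully rigorous, at the mild cost of invoking Hall's theorem where the paper stays elementary; the explicit $k\ge 2$ reduction you include is also a point the paper glosses over (for $k=1$ the statement is immediate). Both arguments yield exactly the same bound, and neither improves it.
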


\begin{proof}
	 We aim to prove that there exists a diagonal $d \in \D$ which is $k$-special for $g$. The thesis will then follow by Proposition \ref{specialin}.
	
	First, the sum of elements on each row of $M(g)$ is at most $n(k-1)$. Let $t$ be the number of rows such that the sum of the elements in each row is at least $(n-1)(k-1)+1$. If $t \le \lceil{\dfrac{n}{2}}\rceil-1 $, then the sum of the elements in each of the remaining $n-t$ rows is at most $(n-1)(k-1)$. Therefore, summing all rows we obtain
	\[ \deg g \leq tn(k-1) + (n-t)(n-1)(k-1)= (k-1)(n^2-n+t) \le (k-1)(n^2- \lfloor{\dfrac{n}{2}}\rfloor -1), \]
	that is a contradiction. Then there are at least $\lceil{\dfrac{n}{2}}\rceil $ rows such that the sum of the elements in each row is at least equal to $(n-1)(k-1)+1$, and the same hold for columns.
	To conclude this proof we extract a $k$-special diagonal from this set of rows and columns, in the following way. 
	
	If $n=2h$ is even we consider $h$ rows and $h$ columns whose sum is at least $(n-1)(k-1)+1$. We pick an element in each row so that none of these is in one of the $h$ columns; we can do that because there are $n$ elements in each row, and only $\frac{n}{2}=h$ belong also to the chosen columns. In an analogous way we pick one element in each column such that none of these belongs to one of the $h$ chosen rows. By construction we have $h+h=n$ elements such that any two of them do not share the same row or column, thus forming a diagonal, and this diagonal is $k$-special since each element belongs to a row or column (thus containing $n$ elements) such that the sum of its elements is at least $(n-1)(k-1)+1$. 
	
	If $n=2h+1$, we consider $h+1$ rows and $h+1$ columns whose sum is at least $(n-1)(k-1)+1$. We pick an element that is in the intersection of one of these rows and one of theses columns. We delete the corresponding row and column and consider the remaining $2h \times 2h$ minor, and we proceed in the same way as in the even case, chosing $2h$ elements that do not share any row or column. Also in this case we have $2h+1=n$ elements that form a diagonal, which is $k$-special for $g$ because by construction each element lies in a row or column whose sum is at least $(n-1)(k-1)+1$.	
\end{proof}

The bound provided in Theorem \ref{partial result square} is most likely not sharp. In fact, for $n=3,4$ we give the exact formula for $c^\mm(\mm)$. 

\subsection{Case $t=m=n=3$}
\begin{thm}\label{sol3}
	Let $g = x_{11}^{\alpha_{11}} \ldots x_{33}^{\alpha_{33}} \in K[x_{11},\ldots,x_{33}]$ be a monomial of degree $6k-5$. Then $g \in (\mm^{[k]},f)$.
\end{thm}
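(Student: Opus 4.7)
The plan is to invoke Proposition \ref{specialin} by producing a $k$-special diagonal $d$ for every monomial $g = \prod x_{ij}^{\alpha_{ij}}$ of degree $6k-5$; as always I may assume each $\alpha_{ij} \leq k-1$ (otherwise $g \in \mm^{[k]}$ trivially). The first step is to analyse the row sums $r_i$ and column sums $c_j$ of $M(g)$. Call a row or column \emph{heavy} if its sum is at least $2k-1$, which is precisely the threshold $(|S|-1)(k-1)+1$ for a size-$3$ covering set (a full row or column). Since $3(2k-2) < 6k-5 < 3(2k-1)$, the number of heavy rows is either $1$ or $2$, and the same holds for columns.

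I would then case-split on the number of heavy rows and heavy columns. Whenever at least two rows or at least two columns are heavy, a direct inspection, analogous to but finer than the counting argument in the proof of Theorem \ref{partial result square}, yields a diagonal $d \in \mathcal{D}$ each of whose three elements lies on some heavy line; $d$ is then $k$-special with that heavy line as the covering set $S$.

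The heart of the argument is the case of exactly one heavy row (say row $1$) and one heavy column (say column $1$): then $r_2,r_3,c_2,c_3 \leq 2k-2$ and $r_1,c_1 \geq 2k-1$, and every diagonal contains exactly one element $(i,j) \in \{2,3\} \times \{2,3\}$ that cannot be covered by a heavy line. For these bad elements I would use the four cross $2\times 2$ submatrices $C_{ij}=\{(1,1),(1,j),(i,1),(i,j)\}$, each of which has minimal intersection $1$ with the corresponding diagonal and qualifies when $\sigma_{ij}:=\sum_{C_{ij}}\alpha \geq 3k-2$.

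The main obstacle is to show that some $\sigma_{ij}$ does attain $3k-2$ in this hard case. Summing the four crosses gives
\[
\sum_{i,j\in\{2,3\}} \sigma_{ij} \;=\; 2r_1+2c_1+T \;=\; r_1+c_1+\alpha_{11}+6k-5,
\]
where $T$ is the sum of the bottom-right $2\times 2$ block, and a naive averaging is too weak in the borderline configurations where $r_1=c_1=2k-1$. I would therefore combine this with the identity $\sigma_{22}+\sigma_{33}-\sigma_{23}-\sigma_{32}=(\alpha_{22}+\alpha_{33})-(\alpha_{23}+\alpha_{32})$, the bounds $\alpha_{ij}\leq k-1$, and the rigid structure forced by the extremal subcase (where the non-heavy rows and columns already saturate the bound $2k-2$) to conclude, on integrality grounds, that some cross sum must reach $3k-2$. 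This quantitative analysis, ruling out the ``almost-counterexamples'' in which every $\sigma_{ij}$ falls just short of the threshold, is the delicate core of the proof and is precisely what makes $6k-5$ the exact right value, strictly better than the bound $7k-6$ coming from Theorem \ref{partial result square}.
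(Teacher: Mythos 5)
There is a genuine gap, and it lies exactly where you flag the ``delicate core'': your plan is to apply Proposition \ref{specialin} directly to $g$, i.e.\ to produce a $k$-special diagonal for every monomial of degree $6k-5$, but no such diagonal need exist, and in particular your key claim that in the hard case (one heavy row, one heavy column) some cross sum $\sigma_{ij}$ reaches $3k-2$ is false. Take $k=20$ and
\[
M(g)=\begin{pmatrix} 19 & 12 & 12\\ 12 & 12 & 12\\ 12 & 12 & 12\end{pmatrix},
\]
a monomial of degree $115=6k-5$ with all exponents at most $k-1=19$. Here $r_1=c_1=43\ge 2k-1=39$ while $r_2=r_3=c_2=c_3=36\le 2k-2$, so this is your hard case, yet every cross sum equals $19+12+12+12=55<58=3k-2$. (The same happens for every $k\equiv 4\pmod 8$ with $\alpha_{11}=k-1$ and all other entries $(5k-4)/8$.) Moreover the failure cannot be repaired by smarter covering sets: any set $S$ used to certify a cell of a diagonal $d$ must meet every diagonal, hence $|S|\ge 3$; if $|S|=3$ it must be a row or a column, and only row $1$ and column $1$ reach $2k-1$; if $|S|=s\ge 4$ its sum is at most $19+12(s-1)=12s+7<19s-18=(s-1)(k-1)+1$. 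Since every diagonal has at least one cell in the lower-right $2\times 2$ block (the two diagonals through $(1,1)$ even have two such cells -- so your statement that every diagonal meets $\{2,3\}\times\{2,3\}$ exactly once is also off, though that is minor), no diagonal is $k$-special for this $g$, and Proposition \ref{specialin} simply does not apply to $g$ itself.

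The missing idea is the paper's first step: fix a diagonal $d$ and use Proposition \ref{repexists} to replace $g$ by the monomials of a $d$-representation, each of which has a \emph{zero} exponent on $d$; since entries of a heavy row or column are all positive, that zero necessarily sits in the $2\times 2$ block off the heavy row and column. It is precisely this zero that rescues the hard case: with (say) $\alpha_{33}=0$ and the second column light, one gets $\alpha_{11}+\alpha_{13}+\alpha_{21}+\alpha_{23}\ge 6k-5-(2k-2)-(k-1)=3k-2$, which is the cross-minor bound your averaging and ``rigidity'' analysis cannot deliver (and, as the example shows, is simply unavailable) without the zero. After that reduction your case analysis essentially becomes the paper's proof, so the fix is to insert the $d$-representation reduction at the start rather than to refine the counting.
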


\begin{proof}
	
	By Proposition \ref{repexists} for every diagonal $d$ there exists a $d$-representation of $g$ of the form $g \equiv g_1+\ldots+g_h \pmod{f}$, and by construction $\deg g' = \deg g = 6k-5$ for every monomial $g'$ appearing in this $d$-representation. Clearly, if every monomial appearing in this $d$-representation belongs to $(\mm^{[k]},f)$, then so does $g$ as well. Therefore, it suffices to prove the thesis under the assumption that one of the $\alpha_{ij}$ is equal to zero. 
	
	Since $\deg g=\alpha_{11}+\ldots+\alpha_{33} = 6k-5$, there is at least one row and one column of $M(g)$ such that the sum of its elements is at least $2k-1$. Assume then that $\alpha_{11}+\alpha_{12}+\alpha_{13} \ge 2k-1$ and $\alpha_{11}+\alpha_{21}+\alpha_{31} \ge 2k-1$. Clearly, none of these elements can be zero; thus we can assume that $\alpha_{33}=0$ (the other cases are identical).

	Consider the elements of the second column of $M(g)$. If $\alpha_{12}+\alpha_{22}+\alpha_{32} \ge 2k-1$ then the diagonal $d=\{(3,1),(2,2),(1,3)\}$ is $k$-special (since each element of $d$ belongs to a row or column which sum of elements is at least $2k-1$), hence $g \in (\mm^{[k]},f)$ by Proposition \ref{specialin}. 
	
	On the other hand, if $\alpha_{12}+\alpha_{22}+\alpha_{32} < 2k-1$, then $\alpha_{12}+\alpha_{22}+\alpha_{32}+\alpha_{31} \le 3k-3$, implying that (since $\alpha_{33}=0$) $$\alpha_{11}+\alpha_{13}+\alpha_{21}+\alpha_{23} = 6k-5 - \alpha_{12}-\alpha_{22}-\alpha_{32}-\alpha_{31} \ge 3k-2.$$
	Then, the diagonal $d=\{(1,2),(2,3),(3,1)\}$ is $k$-special for $g'$ because:
	\begin{enumerate}
		\item The first row is a set having minimal intersection with $d$ and containing $(1,2)$, and such that the sum of its elements is at least $2k-1$.
		\item Thr minor $S=\{(1,1),(1,3),(2,1),(2,3)\}$ of size $2=n-1$ is such that $|S \cap d|=1 = n-2$, then by Remark \ref{minimalsets} it has minimal intersection with $d$ and contains $(2,3)$, and 
		$$ \sum_{(r,q) \in S} \alpha_{rq} = \alpha_{11}+\alpha_{13}+\alpha_{21}+\alpha_{23} \ge 3k-2=(|S|-1)(k-1)+1.$$
		\item The first column is a set having minimal intersection with $d$ and containing $(3,1)$, and such that the sum of its elements is at least $2k-1$.
	\end{enumerate}

	Then $g \in (\mm^{[k]},f)$ by Proposition \ref{specialin}. 

\end{proof}

\subsection{Case $t=m=n=4$}
In the case $t=m=n=4$ we prove that every monomial of degree $12k-11$ belongs to $(\mm^{[k]},f)$ (Theorem \ref{sol4}). However, since we need to consider more cases, we break down the main proof in small steps.

\begin{lemm}\label{223113}
	Let $g \in K[x_{11},\ldots,x_{44}]$ be a monomial, and let $M=(\alpha_{ij})$ be its exponent matrix.
	If there are at least two rows and two columns, or one row and three columns (or one column and three rows) such that the sum of the elements on each of these rows and columns is at least $3k-2$, then $g \in (\mm^{[k]},f)$.
\end{lemm}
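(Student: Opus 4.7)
The plan is to use Proposition \ref{specialin} by exhibiting an explicit $k$-special diagonal for $g$ in each of the listed configurations. Recall that for $n=4$, if $S$ is a row or column of the exponent matrix, then $|S|=4$ and the threshold $(|S|-1)(k-1)+1$ equals exactly $3k-2$. By Remark \ref{minimalsets}(1), rows and columns have minimal intersection with every diagonal. Therefore any entry $(i,j) \in d$ lying in a row or column whose entry-sum is at least $3k-2$ automatically satisfies the $k$-special condition at $(i,j)$. It thus suffices to build a diagonal $d$ each of whose four entries lies in one of the given ``good'' rows or columns.

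For the case of two good rows $R_{i_1},R_{i_2}$ and two good columns $C_{j_1},C_{j_2}$, let $R_{i_3},R_{i_4}$ be the remaining (bad) rows and $C_{j_3},C_{j_4}$ the remaining columns. I would choose a diagonal $d$ consisting of one entry in each bad row placed in one of the two good columns (via an arbitrary bijection $\{R_{i_3},R_{i_4}\} \to \{C_{j_1},C_{j_2}\}$), together with one entry in each good row placed in one of the remaining two bad columns (via an arbitrary bijection $\{R_{i_1},R_{i_2}\} \to \{C_{j_3},C_{j_4}\}$). This yields four entries, one per row and one per column, so it is a diagonal; moreover the two entries in bad rows lie in good columns, and the two entries in good rows lie in arbitrary columns but are themselves in good rows, so every entry of $d$ witnesses the $k$-special condition.

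For the case of one good row $R_{i_1}$ and three good columns $C_{j_1},C_{j_2},C_{j_3}$ (with $R_{i_2},R_{i_3},R_{i_4}$ bad and $C_{j_4}$ the remaining bad column), I would take $d$ to consist of the entry $(R_{i_1},C_{j_4})$ together with an arbitrary bijection $\{R_{i_2},R_{i_3},R_{i_4}\} \to \{C_{j_1},C_{j_2},C_{j_3}\}$. Again this is a diagonal; the entry in $R_{i_1}$ lies in a good row, and the three entries in bad rows lie in good columns, so $d$ is $k$-special. The symmetric configuration (one good column, three good rows) is handled by an identical construction after transposing. In all three configurations Proposition \ref{specialin} gives $g \in (\mm^{[k]},f)$.

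I do not expect any serious obstacle: the bijections used in the construction always exist since they only match sets of equal size with no further constraints, and the numerical check $(|S|-1)(k-1)+1 = 3k-2$ is immediate for $n=4$. The only thing to be careful about is bookkeeping that every entry of the constructed diagonal indeed lies in a good row or column, which the case split above makes transparent.
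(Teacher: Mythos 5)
Your proposal is correct and follows essentially the same route as the paper: both construct a diagonal all of whose entries lie in a ``good'' row or column (the paper does this after a WLOG reordering by taking the explicit anti-diagonal $\{(1,4),(2,3),(3,2),(4,1)\}$, which is exactly your bijection construction) and then invoke Remark \ref{minimalsets} and Proposition \ref{specialin} with the observation that $(|S|-1)(k-1)+1=3k-2$ for a row or column. Your explicit treatment of the one-row/three-columns case is just the detail the paper leaves as ``the other cases are similar.''
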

\begin{proof}
	Assume that there are at least two rows and two columns which sum of elements is at least $3k-2$ (the other cases are similar). We can assume without loss of generality that the first two rows and the first two columns are the ones such that the sum of its elements is at least $3k-2$.
	Then the diagonal $d=\{(1,4),(2,3),(3,2),(4,1)\}$ is $k$-special since every element of $d$ belongs to either the first two rows or the first two columns, therefore $g \in (\mm^{[k]},f)$ by Proposition \ref{specialin}.
\end{proof}

\begin{lemm}\label{2112}
	Let $g \in K[x_{11},\ldots,x_{44}]$ be a monomial of degree $12k-11$, and let $M(g)=(\alpha_{ij})$ be its exponent matrix. Assume that at least one of the $\alpha_{ij}$ is zero, and that there are exactly two rows and one column (or one row and two columns) of $M$ which sum is at least $3k-2$. Then $g \in (\mm^{[k]},f)$.
\end{lemm}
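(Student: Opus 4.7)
The plan is to exhibit a $k$-special diagonal $d$ for $g$ and then apply Proposition \ref{specialin}. WLOG take the case where rows $1,2$ and column $1$ are heavy; the dual case ``one row and two columns'' follows by transposing the matrix. Rows $3,4$ and columns $2,3,4$ then each have sum at most $3k-3$.

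First I will note that the zero entry must lie in the ``light region'' $\{3,4\}\times\{2,3,4\}$: if $\alpha_{i^*,j^*}=0$ with $i^*\in\{1,2\}$, then row $i^*$ has at most three nonzero entries each bounded by $k-1$, so $r_{i^*}\leq 3(k-1)<3k-2$, contradicting heaviness, and similarly $j^*\neq 1$. After relabelling, I may assume $\alpha_{3,2}=0$. A second observation is that a $k$-special diagonal can never contain the zero: any set $S\ni(3,2)$ has $\sum_S \alpha\leq (|S|-1)(k-1)$, one short of the required bound $(|S|-1)(k-1)+1$; hence only diagonals $d$ with $d(3)\neq 2$ are admissible. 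From the degree equation I also derive the auxiliary bounds $r_3,r_4\geq k$ (using $r_1+r_2\leq 8(k-1)$) and $c_2,c_3,c_4\geq 2k-1$ (using $c_1\leq 4(k-1)$); the tighter inequality $r_3+r_4\leq 6k-7$ (from strict heaviness of rows 1, 2) also forces at least one of $r_3,r_4$ to be $\leq 3k-4$.

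I then pick $d$ with $d^{-1}(1)\in\{3,4\}$, so that three of its four entries are covered by a heavy row or column (the row/column already serves as a size-$4$ $k$-special witness of sum $\geq 3k-2$) and only one ``bad'' diagonal entry $(i_0,j_0)\in\{3,4\}\times\{2,3,4\}$ remains. For this bad entry the natural witness $\mathrm{Row}\,i_0\cup\mathrm{Col}\,j_0$ has size $7$ and sum at most $6k-6$, which falls short of the required $6k-5$ by exactly one unit. The argument therefore branches into sub-cases based on the position of $(i_0,j_0)$ relative to the zero and on which of $r_3,r_4$ achieves the smaller bound. In each sub-case one constructs an enlarged witness---for instance a $3\times 3$ minor removing a light row and column not in $d$ (size $9$, condition $\geq 8k-7$), or the set $\mathrm{Row}\,i_0\cup\mathrm{Col}\,j_0\cup\mathrm{Col}\,1$ of size $10$ with minimal intersection $2$ and condition $\geq 9k-8$---and verifies the required bound using the auxiliary estimates together with the zero at $(3,2)$.

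The main obstacle is precisely this final verification: the ``short-by-one'' phenomenon for the natural witness means no single set works uniformly, and the choice must be tailored to each configuration. The zero entry plays a triple role: it forces itself into the light region (step~1), blocks $d$ from passing through $(3,2)$ (step~2), and supplies the extra slack that the enlarged witnesses need to reach the sum bound in step~3. Once a $k$-special diagonal has been exhibited in every case, Proposition \ref{specialin} yields $g\in(\mathfrak{m}^{[k]},f)$.
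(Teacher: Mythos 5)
Your reduction steps are fine (forcing the zero into the light region $\{3,4\}\times\{2,3,4\}$, observing that no $k$-special diagonal can pass through the zero, covering three of the four diagonal entries by the two heavy rows and the heavy column, and noting that the natural size-$7$ witness $\mathrm{Row}\,i_0\cup\mathrm{Col}\,j_0$ is short by one). But the proof has a genuine gap exactly at its crux: the sub-case analysis with ``enlarged witnesses'' is never carried out, and the specific sets you name cannot meet the bounds they would need. A $3\times 3$ minor must have sum at least $8(k-1)+1=8k-7$, yet removing a light row and a light column from a matrix of total degree $12k-11$ only guarantees $12k-11-2(3k-3)=6k-5$ (even using your refinement $\min(r_3,r_4)\le 3k-4$ one only gets $6k-4$); the size-$10$ set $\mathrm{Row}\,i_0\cup\mathrm{Col}\,j_0\cup\mathrm{Col}\,1$ must have sum at least $9(k-1)+1=9k-8$, while its complement consists of the two remaining light columns minus their row-$i_0$ entries, so one can only guarantee $12k-11-(6k-6)=6k-5$. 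For $k\ge 2$ both fall short by a linear amount in $k$, so no bookkeeping over sub-cases with these witnesses can close the argument.

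The missing idea is to \emph{shrink} the witness using the zero rather than enlarge it, and to choose the diagonal adapted to the zero's position: take $d$ so that its column-$1$ element lies in the zero's row and its fourth (``bad'') element lies in the zero's column. In your normalization $\alpha_{32}=0$, take $d=\{(1,3),(2,4),(3,1),(4,2)\}$ and, for the bad entry $(4,2)$, the set $S=\{(1,1),(1,2),(2,1),(2,2),(4,1),(4,2)\}$ (columns $1$ and $2$ with row $3$ removed). Then $|S\cap d|=1$, and every diagonal meets $S$ because its column-$1$ and column-$2$ elements cannot both lie in row $3$, so $S$ has minimal intersection with $d$; moreover the complement of $S$ is the two light columns $3,4$ together with $\alpha_{31}\le k-1$ and $\alpha_{32}=0$, whence
\[
\sum_{(r,q)\in S}\alpha_{rq}\ \ge\ 12k-11-2(3k-3)-(k-1)\ =\ 5(k-1)+1\ =\ (|S|-1)(k-1)+1 .
\]
The other three elements of $d$ are covered by the heavy rows $1,2$ and the heavy column $1$ (Remark \ref{minimalsets}), so $d$ is $k$-special and Proposition \ref{specialin} gives $g\in(\mm^{[k]},f)$ in a single stroke, with no case split after your relabelling. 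This is precisely how the paper's proof proceeds (there the zero is normalized at $(4,4)$ and $d=\{(1,3),(2,2),(3,4),(4,1)\}$ with the six-element witness on columns $1$ and $4$).
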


\begin{proof}
	Up to switching some rows and columns (this action does not change the determinant and is equivalent to renaming our variables), we can make the following assumptions:
	\begin{enumerate}[(a)]
		\item $\alpha_{11}+\ldots+\alpha_{14} \ge 3k-2$, $\alpha_{21}+\ldots+\alpha_{24}\ge 3k-2$, $\alpha_{11}+\ldots+\alpha_{41} \ge 3k-2$;
		\item $\alpha_{31}+\ldots+\alpha_{34} \le 3k-3$, $\alpha_{12}+\ldots+\alpha_{42}\le 3k-3$, $\alpha_{13}+\ldots+\alpha_{43} \le 3k-3$;
		\item $\alpha_{44}=0$.
	\end{enumerate}
	Consider the diagonal $d=\{(1,3),(2,2),(3,4),(4,1)\}$. We have:
	\begin{enumerate}
		\item From (a) we have that the first row is a set containing $(1,3)$ and having minimal intersection with $d$ (see Remark \ref{minimalsets}) which sum is at least $3k-2$.
		\item From (a) again we have that the second row is a set containing $(2,2)$ and having minimal intersection with $d$ (see Remark \ref{minimalsets}) which sum is at least $3k-2$.
		
		\item Consider the set $S=\{(1,1),(1,4),(2,1),(2,4),(3,1),(3,4)\}$ highlighted in red in $M(g)$
		$$M(g)=\begin{pmatrix}
			\color{red}{\alpha_{11}} & \alpha_{12} & \alpha_{13} & \color{red}{\alpha_{14}}\\
			\color{red}{\alpha_{21}} & \alpha_{22} & \alpha_{23} & \color{red}{\alpha_{24}} \\
			\color{red}{\alpha_{31}} & \alpha_{32} & \alpha_{33} & \color{red}{\alpha_{34}} \\
			\alpha_{41} & \alpha_{42} & \alpha_{43} & 0
		\end{pmatrix}.$$
		
		This set contains $(3,4)$, $|S \cap d|=1$ and intersects every diagonal at least once (since every diagonal must contain an element from the first and last column, and cannot contain both $(4,1)$ and $(4,4)$): thus $S$ has minimal intersection with $d$. 
		
		Moreover, from (b) we have $\alpha_{12}+\ldots+\alpha_{42}\le 3k-3$ and $\alpha_{13}+\ldots+\alpha_{43} \le 3k-3$, then  
		
		$$\sum_{(r,q) \in S} \alpha_{rq} = 12k-11 - (\alpha_{12}+\ldots+\alpha_{42})-(\alpha_{13}+\ldots+\alpha_{43})-\alpha_{41} \ge $$ $$\ge 12k-11 - (3k-3)-(3k-3)-(k-1)=5k-4=(|S|-1)(k-1)+1.$$
		\item From (a) we have that the first column is a set containing $(4,1)$ and having minimal intersection with $d$ which sum is at least $3k-2$.
	\end{enumerate}
	Then $d$ is $k$-special for $g$, and thus $g \in (\mm^{[k]},f)$ by Proposition \ref{specialin}.
\end{proof}

\begin{lemm}\label{11}
	Let $g \in K[x_{11},\ldots,x_{44}]$ be a monomial of degree $12k-11$, and let $M(g)=(\alpha_{ij})$ be its exponent matrix. Assume that at least one of the $\alpha_{ij}$ is zero, and that there is exactly one row and one column of $M$ which sum of elements is at least $3k-2$. Then $g \in (\mm^{[k]},f)$.
\end{lemm}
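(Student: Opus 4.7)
Following the normalisation used in Lemma \ref{2112}, I first permute rows and columns so that row $1$ and col $1$ are the unique hot row and column; then $r_i, c_j \leq 3k-3$ for $i,j \in \{2,3,4\}$. Any zero of $g$ must lie in the bottom-right $3 \times 3$ submatrix, because a zero in row $1$ (resp.\ col $1$) would cap its sum at $3(k-1)= 3k-3 < 3k-2$, contradicting hotness. By further permutations within rows $\{2,3,4\}$ and cols $\{2,3,4\}$, I may assume $\alpha_{44}=0$. From the fact that any two of $r_2,r_3,r_4$ (resp.\ $c_2,c_3,c_4$) sum to at most $6k-6$, I extract the key inequalities
\[
r_1 + r_i \geq 6k-5, \qquad c_1 + c_j \geq 6k-5 \qquad \text{for all } i,j \in \{2,3,4\},
\]
together with the bound $12k-11 - r_4 - c_4 \geq 6k-5$ on the top-left $3\times 3$ submatrix.

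I then try to show that the cyclic diagonal $d=\{(1,2),(2,3),(3,4),(4,1)\}$ is $k$-special for $g$, covering three of its four entries exactly as in Lemma \ref{2112}: row $1$ covers $(1,2)$ with sum $\geq 3k-2$; col $1$ covers $(4,1)$ with sum $\geq 3k-2$; and the $3\times 2$ minor on rows $1,2,3$ and cols $1,4$ covers $(3,4)$ with sum $c_1+c_4-\alpha_{41} \geq 5k-4$ (and, by Remark \ref{minimalsets}, minimal intersection $1$ with $d$ because $(4,1)\in d$ sits in its column set).

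The main obstacle is the remaining entry $(2,3)$. The candidate $k$-special covers that achieve minimal intersection $1$ with $d$ are the $3\times 2$ minor on rows $1,2,3$ and cols $1,3$, the $3\times 2$ minor on rows $1,2,4$ and cols $3,4$, and the $2\times 3$ minor on rows $1,2$ and cols $1,3,4$, with sums bounded below respectively by $c_1+c_3-\alpha_{41}-\alpha_{43}$, $c_3+c_4-\alpha_{33}-\alpha_{34}$, and $r_1+r_2-\alpha_{12}-\alpha_{22}$. Each is $\geq 6k-5-2(k-1)=4k-3$, which falls exactly $k-1$ short of the required $5k-4$. I plan to close this gap by a case split on the three pairs of competing entries
\[
(\alpha_{41},\alpha_{43}), \quad (\alpha_{33},\alpha_{34}), \quad (\alpha_{12},\alpha_{22}):
\]
as soon as any one pair has sum $\leq k-1$, the corresponding minor reaches $5k-4$ and Proposition \ref{specialin} finishes the proof. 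By transpose-symmetric variants of this argument (exchanging the roles of row and column, and of $(2,3)$ with $(3,2)$ via an alternative diagonal such as the anti-diagonal $\{(1,4),(2,3),(3,2),(4,1)\}$), I can further reduce the number of bad subcases.

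The real difficulty is the residual case in which all three pairs have sums $\geq k$; in that situation the border entries $\alpha_{14},\alpha_{24},\alpha_{34},\alpha_{41},\alpha_{42},\alpha_{43}$ are forced essentially to saturate their upper bound $k-1$, so the top-left $3\times 3$ submatrix attains degree precisely $6k-5$ and hence belongs to $(\mathfrak{m}_3^{[k]},f_3)$ by Theorem \ref{sol3}. To transport that membership back to $(\mathfrak{m}^{[k]},f)$, I will use the adjugate identity $x_{i4}\,f_3 = x_{i1}M_{41}-x_{i2}M_{42}+x_{i3}M_{43}$ for $i\in\{1,2,3\}$ (a consequence of $X\cdot\mathrm{adj}(X)=\det(X)\,I$, valid in $S$ and not merely modulo $f$), together with its column analogue: since every $x_{i4}$ and $x_{4j}$ appears in $g$ with exponent at the saturation level, one substitution of the identity into $f_3 \cdot (\text{border monomial})$ produces only terms carrying some $x_{ij}^k$ factor, placing $g$ into $(\mathfrak{m}^{[k]},f)$ and completing the proof.
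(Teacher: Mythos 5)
Your normalisation, the covers of $(1,2)$, $(4,1)$, $(3,4)$ for the diagonal $d=\{(1,2),(2,3),(3,4),(4,1)\}$, and the three favourable subcases for the entry $(2,3)$ are all correct (the minimal-intersection checks for your $3\times 2$ blocks must be verified by hand, since Remark \ref{minimalsets}(2) only covers minors of size $n-1$, but they do hold). The genuine gap is the residual case, which is the heart of the lemma. First, your structural claim there is false: the inequalities $\alpha_{41}+\alpha_{43}\ge k$, $\alpha_{33}+\alpha_{34}\ge k$, $\alpha_{12}+\alpha_{22}\ge k$ do not force the border entries to saturate, nor the top-left $3\times 3$ block to have degree exactly $6k-5$. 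For $k\ge 3$ the exponent matrix
\[
\begin{pmatrix} k-1 & k-1 & k-1 & k-1\\ k-1 & k-1 & 0 & k-1\\ k-1 & 0 & k-1 & k-1\\ k-1 & k-1 & 1 & 0\end{pmatrix}
\]
has degree $12k-11$, exactly one hot row and one hot column, a zero at $(4,4)$, and satisfies all three residual inequalities, yet $\alpha_{43}=1$ and the top-left block has degree $7k-7$. (That the top-left block has degree at least $6k-5$ is automatic from $r_4,c_4\le 3k-3$, so this was never the real issue.) Second, and more seriously, the final transport step does not work: knowing $\tilde g\in(\tilde\mm^{[k]},f_3)$ with $f_3=M_{44}$ gives no information about membership in $(\mm^{[k]},f)$, because $f_3\notin(\mm^{[k]},f)$; and the adjugate identity $x_{i4}f_3=x_{i1}M_{41}-x_{i2}M_{42}+x_{i3}M_{43}$ merely trades $f_3$ for the complementary minors $M_{41},M_{42},M_{43}$, which are not in $(\mm^{[k]},f)$ either. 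Your assertion that one substitution ``produces only terms carrying some $x_{ij}^k$ factor'' is unsubstantiated: the new terms have the form (monomial)$\cdot M_{4j}$, and nothing guarantees the monomial factor reaches exponent $k$ in any variable. So your argument stops exactly at the hardest case.

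For comparison, the paper never insists that a single diagonal be $k$-special for $g$ itself. It takes $d=\{(1,4),(2,1),(3,3),(4,2)\}$ and, when $d$ fails to be $k$-special, passes to a $d$-representation of $g$ (Proposition \ref{repexists}): every monomial $g'$ in it carries a forced zero $\beta_{ij}=0$ with $(i,j)\in d$, and Lemma \ref{sumdrepresentation} transfers the row-, column- and minor-sum bounds from $M(g)$ to $M(g')$. Three of the four possible zero positions then yield an exponent $\ge k$ directly, and in the remaining case $\beta_{33}=0$ a second diagonal $d'=\{(1,4),(2,3),(3,1),(4,2)\}$ is shown to be $k$-special for $g'$, so Proposition \ref{specialin} is applied at the level of $g'$. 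This second-level use of the representation --- exploiting the zero it creates on the diagonal --- is the idea missing from your proposal; some device of that kind (rather than $3\times 3$ determinantal identities, which cannot land in $(\mm^{[k]},f)$) is what you would need to close the residual case.
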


\begin{proof}
	Up to a permutation of rows and columns, we can make the following assumptions.
	\begin{enumerate}
		\item[R1)] $\alpha_{11}+\ldots+\alpha_{14}\geq 3(k-1)+1$;
		\item[Ri)] $\alpha_{i1}+\ldots+\alpha_{i4}\leq 3(k-1)$ for $i=2,3,4$;
		\item[C1)] $\alpha_{11}+\ldots+\alpha_{41}\geq 3(k-1)+1$;
		\item[Ci)] $\alpha_{1i}+\ldots+\alpha_{4i}\leq 3(k-1)$ for $i=2,3,4$;
		\item[Z)] $\alpha_{44}=0$.
	\end{enumerate}
	
	Consider the diagonal $d=\{(1,4),(2,1),(3,3),(4,2)\}$. If $d$ is $k$-special for $g$, then $g \in (\mm^{[k]},f)$ by Proposition \ref{specialin}. 
	
	Assume then that $d$ is not $k$-special for $g$. Let $g \equiv g_1+\ldots+g_h \pmod{f}$ be a $d$-representation of $g$, and let $g'=x_{11}^{\beta_{11}}\ldots x_{nn}^{\beta_{nn}}$ be a monomial appearing in this $d$-representation. By definition of $d$-representation there must exist an index $(i,j) \in d$ such that $\beta_{ij}=0$. We have four possible cases depending on which $\beta_{ij}$ is zero; we want to prove that $g' \in (\mm^{[k]},f)$ in all these cases. This will imply $g \in (\mm^{[k]},f)$. 
	\begin{enumerate}
		\item If $\beta_{14}=0$, since rows and columns have minimal intersection with every diagonal by Remark \ref{minimalsets}, by Lemma \ref{sumdrepresentation} we have $\beta_{11}+\ldots+\beta_{14} = \alpha_{11}+\ldots+\alpha_{14} \ge 3(k-1)+1$, that is $\beta_{11}+\beta_{12}+\beta_{13} \ge 3(k-1)+1$. Then at least one among $\beta_{11},\beta_{12},\beta_{13}$ is at least $k$, and $g' \in (\mm^{[k]},f)$. 
		\item If $\beta_{21}=0$, arguing as in the previous case we have $\beta_{11}+\ldots+\beta_{41} = \alpha_{11}+\ldots+\alpha_{41} \ge 3(k-1)+1$, that is $\beta_{11}+\beta_{31}+\beta_{41} \ge 3(k-1)+1$. Then at least one among $\beta_{11},\beta_{31},\beta_{41}$ is at least $k$, and $g' \in (\mm^{[k]},f)$. 
		\item If $\beta_{42}=0$, consider the set $S=\{(1,1),(1,2),(1,3),(4,1),(4,2),(4,3)\}$. Since $|S \cap d|=1$ and $S$ intersects every other diagonal (since every diagonal must contain an element from the first and last row, and cannot contain both $(1,4)$ and $(4,4)$), $S$ has minimal intersection with $d$. Then by Lemma \ref{sumdrepresentation} we have $$\sum_{(r,q) \in S} \beta_{rq} \ge \sum_{(r,q) \in S} \alpha_{rq} = 12k-11 - (\alpha_{21}+\ldots+\alpha_{24})-(\alpha_{31}+\ldots+\alpha_{34}) - \alpha_{14},$$ 
		
		hence by R2), R3) and $\alpha_{14} \le k-1$ we deduce $$\sum_{(r,q) \in S} \beta_{rq} \ge 12k-11 - 3(k-1)-3(k-1)-(k-1)\ge 5(k-1)+1,$$ 
		 hence $\beta_{11}+\beta_{12}+\beta_{13}+\beta_{41}+\beta_{43} \ge 5(k-1)+1$ and at least one $\beta_{ij}$ in that sum is at least $k$. Then $g' \in (\mm^{[k]},f)$. 
		\item If $\beta_{33}=0$, consider the diagonal $d'=\{(1,4),(2,3),(3,1),(4,2)\}$. We verify that $d'$ is $k$-special for $g'$ by checking the $k$-special property in each index $(i,j) \in d'$:
		\begin{itemize}
			\item If $(i,j)=(1,4)$, the first row contains $(1,4)$, has minimal intersection with both $d$ and $d'$ by Remark \ref{minimalsets}, thus by Lemma \ref{sumdrepresentation} and R1) $\beta_{11}+\ldots+\beta_{14} \ge \alpha_{11}+\ldots+\alpha_{14} \ge 3(k-1)+1$, and the condition is satisfied; 
			\item If $(i,j)=(3,1)$, the first column contains $(3,1)$, has minimal intersection with both $d$ and $d'$ by Remark \ref{minimalsets}, thus by Lemma \ref{sumdrepresentation} and R1) $\beta_{11}+\ldots+\beta_{41} \ge \alpha_{11}+\ldots+\alpha_{41} \ge 3(k-1)+1$, and the condition is satisfied;
			\item If $(i,j)=(4,2)$, consider the set $S:=\{(1,1),(1,2),(1,3),(4,1),(4,2),(4,3)\}$.
			From R2), R3) and Z) we deduce $\alpha_{11}+\ldots+\alpha_{14}+\alpha_{41}+\ldots+\alpha_{43} \ge 6(k-1)+1$, and since $\alpha_{14} \le k-1$ we obtain $$\sum_{(r,q) \in S} \alpha_{rq}= \alpha_{11}+\ldots+\alpha_{13}+\alpha_{41}+\ldots+\alpha_{43} \ge 5(k-1)+1.$$ 
			Furthermore, since $S$ intersects every diagonal in at least one element, and $|S \cap d|=1$, $S$ has minimal intersection with $d$, and thus Lemma \ref{sumdrepresentation} yields
			$$\sum_{(r,q) \in S} \beta_{rq} \ge \sum_{(r,q) \in S} \alpha_{rq} \ge 5(k-1)+1.$$ 
			Since $S$ has minimal intersection with $d'$ and contains $(4,2)$, the condition is satisfied;
			\item  If $(i,j)=(2,3)$, consider the set $S':=\{(1,1),(2,1),(4,1),(1,3),(2,3),(4,3)\}$. Since rows and columns have minimal intersection with every diagonal, by Lemma \ref{sumdrepresentation} and C2), C4) we have $\beta_{12}+\ldots+\beta_{42} = \alpha_{12}+\ldots + \alpha_{42} \le 3(k-1)$ and $\beta_{14}+\ldots+\beta_{44} = \alpha_{14}+\ldots + \alpha_{44} \le 3(k-1)$. Then, since $\beta_{33}=0$ and $\beta_{31} \le k-1$ we obtain
			$$\sum_{(r,q) \in S'} \beta_{rq} = 12(k-1)+1 - (\beta_{12}+\ldots+\beta_{42})-(\beta_{14}+\ldots+\beta_{44})-\beta_{31} \ge $$ $$\ge 12(k-1)+1-3(k-1)-3(k-1)-(k-1)=5(k-1)+1.$$
			Since $S'$ intersects every diagonal at least once and $|S' \cap d'|=1$, $S'$ has minimal intersection with $d'$ (and contains $(2,3)$), thus the condition is satisfied.
		\end{itemize}
		Then $d'$ is $k$-special for $g'$, and $g' \in (\mm^{[k]},f)$ by Proposition \ref{specialin}.
	\end{enumerate}
\end{proof}

\begin{thm}\label{sol4}
	Let $g \in K[x_{11},\ldots,x_{44}]$ be a monomial of degree $12k-11$. 
	
	Then $g \in (\mm^{[k]},f)$.
\end{thm}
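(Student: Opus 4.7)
The plan is to reduce the theorem to a case analysis using the three preceding lemmas, based on the distribution of large row and column sums of the exponent matrix $M(g) = (\alpha_{ij})$. Call a row (resp.\ column) of $M(g)$ \emph{large} if its element sum is at least $3k-2$, and let $a$ (resp.\ $b$) denote the number of large rows (resp.\ columns). As usual we may assume every $\alpha_{ij} \le k-1$, else $g \in \mm^{[k]}$ trivially. Since $\deg g = 12k-11$, if every row sum were at most $3k-3$ the total degree would be at most $12k-12$, a contradiction; hence $a \ge 1$, and by symmetry $b \ge 1$.

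The heavy lifting is already done by Lemmas \ref{223113}, \ref{2112}, and \ref{11}. If $a \ge 2$ and $b \ge 2$, or if $\max(a,b) \ge 3$, then Lemma \ref{223113} applies directly. The remaining configurations are $(a,b) \in \{(1,1),(1,2),(2,1)\}$. In these cases, provided $M(g)$ has at least one zero entry, Lemma \ref{11} (for $(a,b)=(1,1)$) or Lemma \ref{2112} (for $(a,b) \in \{(1,2),(2,1)\}$) applies up to a permutation of rows and columns.

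If $M(g)$ has no zero entry, I would invoke Proposition \ref{repexists} to produce a $d$-representation $g \equiv g_1 + \ldots + g_h \pmod{f}$ for an arbitrary diagonal $d$; every $g_i$ has degree $12k-11$ and at least one zero on $d$. For each $g_i$: if some entry exceeds $k-1$, then $g_i \in \mm^{[k]}$; otherwise, by Lemma \ref{sumdrepresentation} applied to rows and columns (which have minimal intersection with every diagonal by Remark \ref{minimalsets}), the row and column sums of $M(g_i)$ coincide with those of $M(g)$, so $g_i$ inherits the same pair $(a,b)$ while featuring the zero entry required to apply Lemma \ref{11} or Lemma \ref{2112}.

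The only subtle point in invoking Lemmas \ref{11} and \ref{2112} is that their permutation argument requires the zero to lie outside all large rows and columns; but this is automatic under the assumption that all entries are bounded by $k-1$, since a large row has sum at least $3(k-1)+1$ while three entries bounded by $k-1$ sum to at most $3(k-1)$, ruling out zeros in large rows (and symmetrically for columns). I do not foresee a significant obstacle: the structural content lives in the preceding lemmas, and Theorem \ref{sol4} simply orchestrates them via this case analysis.
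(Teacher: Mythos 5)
Your proof is correct and takes essentially the same route as the paper: a case analysis on the number of rows and columns with sum at least $3k-2$, dispatched to Lemmas \ref{223113}, \ref{2112} and \ref{11}, with a $d$-representation (Proposition \ref{repexists} together with Lemma \ref{sumdrepresentation} and Remark \ref{minimalsets}) supplying the required zero entry while preserving row and column sums. The only differences are cosmetic --- the paper passes to the $d$-representation at the outset rather than only when $M(g)$ has no zero --- and your observation that, under $\alpha_{ij}\le k-1$, a zero cannot sit in a large row or column correctly justifies the zero placement that Lemmas \ref{2112} and \ref{11} assume after permuting rows and columns.
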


\begin{proof}
	Let $d \in \D$ be a diagonal, and consider a $d$-representation of $g$ of the form $g \equiv g_1+\ldots+g_h \pmod{f}$. Let $g'$ be a monomial appearing in this $d$-representation of $g$. By definition of $d$-representation, at least one entry of the exponent matrix $M(g')$ is equal to zero.
	Since ${\rm deg}(g')=12k-11$, there is at least one row and one column of $M(g')$ which sum is at least $3k-2$. Depending on their number, we have the following cases:
	\begin{enumerate}
		\item If there are at least three rows and one column (or three columns and one row) which sum is at least $3k-2$, then $g' \in (\mm^{[k]},f)$ by Lemma \ref{223113};
		\item If there are at least two rows and columns of $M'$ which sum is at least $3k-2$, then $g' \in (\mm^{[k]},f)$ by Lemma \ref{223113};
		\item If there are two rows and one column (or one row and two columns) of $M'$ which sum is at least $3k-2$, then $g' \in (\mm^{[k]},f)$ by Lemma \ref{2112};
		\item If there is one rows and one column of $M'$ which sum is at least $3k-2$, then $g' \in (\mm^{[k]},f)$ by Lemma \ref{11};
	\end{enumerate}
	Then $g' \in (\mm^{[k]},f)$ for all $g'$ appearing in the $d$-representation of $g$, and thus $g \in (\mm^{[k]},f)$.
\end{proof}

Now we can give the exact value of $c^\mm(\mm)$ for $n=3,4$.

\begin{cor}\label{cm34}
	In the context of problem \ref{prob} with $n=m=t$, if $n=3,4$ then $c^{\mm}(\mm) = n(n-1)$.
\end{cor}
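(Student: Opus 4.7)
The plan is to combine the upper bound supplied by Theorems \ref{sol3} and \ref{sol4} with the lower bound from Theorem \ref{lowerbound}, and then pass to the limit in the definition of the $F$-threshold.

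First, for the upper bound I would observe that Theorems \ref{sol3} and \ref{sol4} say exactly that every monomial of degree $6k-5$ (if $n=3$) or $12k-11$ (if $n=4$) lies in $(\mm^{[k]},f)$. Since $6k-5=n(n-1)(k-1)+1$ when $n=3$ and $12k-11=n(n-1)(k-1)+1$ when $n=4$, and since $\nu_k(\mm)$ is the maximal degree of a monomial not belonging to $(\mm^{[k]},f)$ (as discussed in Section 2), this gives
\[
\nu_k(\mm)\leq n(n-1)(k-1)
\]
for every positive integer $k$. Specializing to $k=p^e$, dividing by $p^e$ and taking the limit as $e\to\infty$, I obtain
\[
c^\mm(\mm)=\lim_{e\to\infty}\frac{\nu_{p^e}(\mm)}{p^e}\leq \lim_{e\to\infty}\frac{n(n-1)(p^e-1)}{p^e}=n(n-1).
\]

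For the matching lower bound I would invoke Theorem \ref{lowerbound}, which in the setting of Problem \ref{prob} yields $c^\mm(\mm)\geq n(t-1)$. With $t=n$ this reads $c^\mm(\mm)\geq n(n-1)$, and combined with the upper bound above I conclude $c^\mm(\mm)=n(n-1)$ in both cases $n=3$ and $n=4$.

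There is essentially no obstacle here since both main ingredients are already proved: Theorems \ref{sol3} and \ref{sol4} carry the whole combinatorial content, and Theorem \ref{lowerbound} provides the generic lower bound. The only mild bookkeeping point is verifying the identity $n(n-1)(k-1)+1=6k-5$ (resp.\ $12k-11$) and checking that, since $R=S/(f)$ is standard graded, $F$-finite and $F$-pure (as determinantal rings are $F$-pure), Theorem \ref{lowerbound} indeed applies; both are straightforward.
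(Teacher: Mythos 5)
Your proposal is correct and follows essentially the same route as the paper: the upper bound comes from Theorems \ref{sol3} and \ref{sol4} applied with $k=p^e$ (via the identity $n(n-1)(k-1)+1=6k-5$, resp.\ $12k-11$) and passing to the limit, while the lower bound $n(n-1)\le c^{\mm}(\mm)$ is read off from Theorem \ref{lowerbound} (the paper quotes it through ${\rm fpt}(\mm)$, you through $-a(R)$, but it is the same chain of inequalities). No gap.
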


\begin{proof}
	By Theorem \ref{lowerbound} we have	$n(n-1) = {\rm fpt}(\mm) \leq c^{\mm}(\mm)$. On the other hand, if $n=3,4$ by Theorem \ref{sol3} and \ref{sol4} with $k=p^e$ we obtain $\nu_{p^e}(\mm) \le n(n-1)(p^e-1)$. Then $\displaystyle c^{\mm}(\mm)=\lim_{e\rightarrow\infty}\frac{\nu_{p^e}(\mm)}{p^e}\leq n(n-1)$, and the thesis follows.
\end{proof}

In light of this last result we state the following conjecture. 
\begin{conjecture}\label{squareconj}
	If $n=m=t$ then $c^\mm(\mm)=n(n-1)$.
\end{conjecture}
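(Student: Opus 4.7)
The plan is to extend the combinatorial strategy of Theorems \ref{sol3} and \ref{sol4} to arbitrary $n$. By Theorem \ref{lowerbound}, together with the fact used in the proof of Corollary \ref{cm34} that $\mathrm{fpt}(\mm) = n(n-1)$ in the square case, the lower bound $c^\mm(\mm) \geq n(n-1)$ is already available; it therefore suffices to prove the matching upper bound. Mimicking the argument of Corollary \ref{cm34}, this reduces to showing that every monomial $g \in K[x_{11}, \ldots, x_{nn}]$ of degree $n(n-1)(k-1) + 1$ belongs to $(\mm^{[k]}, f)$ for every positive integer $k$: one then obtains $\nu_k(\mm) \leq n(n-1)(k-1)$ and, passing to the limit, $c^\mm(\mm) \leq n(n-1)$.

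To prove this key claim, the plan is to fix the main diagonal $d = \{(1,1),\ldots,(n,n)\}$ and take a $d$-representation of $g$ via Proposition \ref{repexists}. By Proposition \ref{specialin}, it is enough to show that every monomial $g' = x_{11}^{\beta_{11}} \ldots x_{nn}^{\beta_{nn}}$ appearing in this representation admits a $k$-special diagonal. Two structural facts can be exploited: by construction some $\beta_{ii} = 0$ with $(i,i) \in d$, and a simple averaging argument shows that at least one row and one column of $M(g')$ are \emph{heavy}, meaning their entry sum is at least $(n-1)(k-1)+1$. Since rows and columns have minimal intersection with every diagonal by Remark \ref{minimalsets}, heavy lines are the basic building blocks of candidate $k$-special diagonals.

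The argument then splits by the counts $r$ of heavy rows and $c$ of heavy columns of $M(g')$. When $r + c$ is sufficiently large, a direct matching as in Lemma \ref{223113} and Theorem \ref{partial result square} produces a $k$-special diagonal, by choosing positions in distinct heavy rows that avoid heavy columns, and vice versa. When $r + c$ is small, heavy lines alone cannot witness the $k$-special property at every index of $d$, and one must imitate the trick of Lemmas \ref{2112} and \ref{11}: construct auxiliary sets $S \subseteq \{1,\ldots,n\} \times \{1,\ldots,n\}$ with $|S \cap d|$ strictly less than the generic value (so that $S$ has minimal intersection with $d$) whose cumulative mass $\sum_{(r,q) \in S} \alpha_{rq}$ is forced, by the abundance of \emph{light} rows and columns, to be at least $(|S|-1)(k-1)+1$. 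When even $d$ itself fails to be $k$-special at some index, one passes to a neighboring diagonal $d'$ obtained from $d$ by a single transposition, as in case (4) of Lemma \ref{11}, and verifies the $k$-special property there using Lemma \ref{sumdrepresentation}.

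The main obstacle is combinatorial. As $n$ grows, the catalogue of minimal-intersection sets $S$ and the number of $(r,c)$ configurations requiring custom treatment both grow rapidly, so an ad hoc case-by-case extension of the $n=3,4$ proofs appears infeasible. A more promising route is inductive: use the existence of a heavy row and a heavy column to handle the indices of $d$ lying on them, and reduce the remaining analysis to an $(n-1)\times(n-1)$ minor obtained by deleting a heavy row and column, with a correspondingly adjusted degree bound. Identifying the right inductive invariant --- likely a deficit condition on rows and columns tight enough to propagate through the reduction, and compatible with the swap mechanism underlying $d$-representations --- is in our view the crux of Conjecture \ref{squareconj}.
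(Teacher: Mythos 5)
The statement you are addressing is left \emph{open} in the paper: Conjecture \ref{squareconj} is exactly the assertion the authors do not prove, having established it only for $n=3,4$ (Theorems \ref{sol3} and \ref{sol4}, Corollary \ref{cm34}) together with the weaker general bound $c^\mm(\mm)\le n^2-\lfloor n/2\rfloor-1$ of Theorem \ref{partial result square}. Your reduction is the correct one and matches the paper's own framing: the lower bound $n(n-1)\le c^\mm(\mm)$ comes from Theorem \ref{lowerbound}, so the conjecture is equivalent to showing that every monomial of degree $n(n-1)(k-1)+1$ in $K[x_{11},\ldots,x_{nn}]$ lies in $(\mm^{[k]},f)$, i.e. $\nu_k(\mm)\le n(n-1)(k-1)$; and your intended tools ($d$-representations, $k$-special diagonals, minimal-intersection sets, Lemma \ref{sumdrepresentation}) are precisely the paper's machinery. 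However, what you submit is a plan, not a proof: the central combinatorial claim --- that every monomial appearing in a $d$-representation of such a $g$ admits a $k$-special diagonal --- is never established for general $n$, and you concede as much. That missing step \emph{is} the conjecture; nothing in the proposal closes it.

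Two concrete points about why the sketched route is harder than it may appear. First, at the conjectural degree $n(n-1)(k-1)+1$ the averaging argument guarantees only \emph{one} heavy row and \emph{one} heavy column, whereas the matching construction of Theorem \ref{partial result square} needs about $\lceil n/2\rceil$ of each; so the ``large $r+c$'' regime you dispatch by matching is the exceptional case, and almost all configurations fall into the regime that required the delicate ad hoc sets $S$ and the auxiliary diagonal $d'$ in Lemmas \ref{2112} and \ref{11}. Second, the proposed induction on $n$ by deleting a heavy row and column has a quantitative deficit: the deleted lines can carry up to $2n(k-1)$ of the degree, leaving the $(n-1)\times(n-1)$ minor with total only about $n(n-3)(k-1)+1$, which is $2(k-1)$ short of the $(n-1)(n-2)(k-1)+1$ needed to apply the inductive statement; moreover, in the square case the $(n-1)\times(n-1)$ subdeterminant does not lie in $(f)$, so membership results for the minor cannot be imported directly as in Proposition \ref{t=m}, and minimal-intersection sets for the minor need not remain minimal-intersection sets for the full matrix. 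Identifying an invariant that survives both the swap mechanism and this reduction is, as you say, the crux --- but until it is produced, the proposal does not prove the statement, and indeed no proof is currently known.
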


\section{Proof of main results} 
In this Section we build on the results obtained in the case $t=m=n$ to attack the more general case $t=m \le n$ of Problem \ref{altprob} and prove the main Theorems stated in the introduction. Since the ideal $I=I_t(X)$ is not principal in this case, instead of the combinatorial machinery introduced in the case $t=m=n$ used to work modulo $f={\rm det}(X)$, we use a counting argument to find a suitable $t \times t$ minor $T$, and then work modulo ${\rm det}(T)$, leveraging the results obtained in the previous Section.

\begin{prop}\label{t=m}
Let $g \in K[x_{11},\ldots,x_{nm}]$ be a monomial.
\begin{enumerate}
	\item If $\deg g \ge n\left(t - \frac{1}{2} \right)(k-1)+1$, then $g \in (\mm^{[k]},I)$;
	\item If $t=m=3,4$ and $\deg g \ge n(t-1)(k-1)+1$, then $g \in (\mm^{[k]},I)$.
\end{enumerate}
\end{prop}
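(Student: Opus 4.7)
The plan is to reduce Proposition~\ref{t=m} to the square case treated in Section~3 via a short double-counting argument. Since $t=m$, every $t\times t$ minor of $X$ is determined by a choice of $t$ rows out of $n$; denote such a choice by $T$, and write $T$ also for the resulting $t\times t$ submatrix. Given a monomial $g\in K[x_{11},\ldots,x_{nm}]$ with exponent matrix $M(g)=(\alpha_{ij})$, let $g|_T$ be the product of the variable factors of $g$ whose row index lies in $T$, so that $g=g|_T\cdot g_{\bar T}$ with $g_{\bar T}$ involving only variables outside the rows of $T$. Because $\det(T)\in I$, if for some $T$ we can show $g|_T\in(\mm^{[k]},\det(T))$, then multiplying by $g_{\bar T}$ places $g$ in $(\mm^{[k]},I)$, as desired.

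The crucial step is to select a row set $T$ for which $\deg g|_T$ is large enough to apply a square-case result from Section~3. Observing that $\deg g|_T=\sum_{i\in T,\,j}\alpha_{ij}$, I would sum over all $\binom{n}{t}$ choices of $T$: each entry $\alpha_{ij}$ appears in exactly $\binom{n-1}{t-1}$ of these subsets (those containing row $i$), so
\[
\sum_{T}\deg g|_T \;=\; \binom{n-1}{t-1}\,\deg g,
\]
and hence some $T$ satisfies $\deg g|_T\ge\tfrac{t}{n}\deg g$.

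For part~(1), the hypothesis $\deg g\ge n(t-\tfrac12)(k-1)+1$ gives $\tfrac{t}{n}\deg g > t(t-\tfrac12)(k-1)$, and a short calculation shows $t(t-\tfrac12)>t^2-\lfloor t/2\rfloor-1$ irrespective of the parity of $t$ (the gap is $\tfrac{1}{t}$ for even $t$ and $\tfrac{1}{2t}$ for odd $t$); combining this with the integrality of $\deg g|_T$ produces a $T$ with $\deg g|_T\ge(t^2-\lfloor t/2\rfloor-1)(k-1)+1$. Theorem~\ref{partial result square}, applied inside the polynomial subring $K[x_{ij}:i\in T]$ (a polynomial ring in $t^2$ variables), then yields $g|_T\in(\mm^{[k]},\det(T))$. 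For part~(2), the hypothesis $\deg g\ge n(t-1)(k-1)+1$ analogously gives $\deg g|_T\ge t(t-1)(k-1)+1$ for some $T$, and Theorem~\ref{sol3} or~\ref{sol4} (according to whether $t=3$ or $t=4$) produces $g|_T\in(\mm^{[k]},\det(T))$.

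I do not anticipate a serious obstacle: all the combinatorial heavy lifting was carried out in Section~3 for the square case, while the rectangular setting is absorbed by the elementary averaging above. The only point requiring care is the verification that the averaged restricted degree strictly exceeds the relevant integer threshold, so that integrality of $\deg g|_T$ supplies the extra $+1$ needed to invoke the square-case theorems.
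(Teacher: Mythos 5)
Your proposal is correct and follows essentially the same route as the paper: the identical double-counting over the $\binom{n}{t}$ row-subsets, the same averaging bound $\deg g|_T\ge\frac{t}{n}\deg g$, the same integrality step, and the same appeal to Theorem~\ref{partial result square} (resp.\ Theorems~\ref{sol3} and~\ref{sol4}) in the $t\times t$ subring, followed by $\det(T)\in I$ to conclude. The only blemish is the parenthetical gap values in part~(1) (the difference $t(t-\tfrac12)-(t^2-\lfloor t/2\rfloor-1)$ is $1$ for even $t$ and $\tfrac12$ for odd $t$, not $\tfrac1t$ and $\tfrac1{2t}$), which does not affect the argument since only strict inequality plus integrality is used.
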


\begin{proof}

Consider the $n \times m$ exponent matrix $M(g)$. Since $t=m$, there are exactly $\binom{n}{t}$ different $t \times t$  minors in $M(g)$ (obtained by choosing $t$ rows among $n$), which we will denote by $T_i$. For a given $t \times t$ minor $T_i$, let $\sigma_i$ be the sum of all elements of $T_i$. Since every element of $M(g)$ belongs to exactly $\binom{n-1}{t-1}$ minors (the ones including its row among the $t$ chosen), we have 
$$\sum_{i=1}^{\binom{n}{t}} \sigma_i = \binom{n-1}{t-1}\deg g.$$

Thus, denoting by $\tilde{\sigma}$ the arithmetic mean of the $\sigma_i$-s, we obtain that

$$\tilde{\sigma}= \frac{\binom{n-1}{t-1}\deg g}{\binom{n}{t}}=\frac{t}{n}\deg g.$$ 
By definition of $\tilde{\sigma}$ there exists an index $l$ such that $\sigma_l \ge \tilde{\sigma} \ge \frac{t}{n}\deg g$. By definition, $\sigma_l$ is the sum of all elements in the associated minor $T_l$ of the exponent matrix $M(g)$. Let $\tilde{X}$ be the $t \times t$ minor of the matrix of variables $X$ associated to the exponents appearing in $T_l$, let $\tilde{S}=K[\tilde{X}]$ be the associated polynomial ring, $\tilde{\mm}$ be its maximal homogeneous ideal and let $\tilde{f} := {\rm det}(\tilde{X})$. Notice that $\tilde{f} \in I=I_t(X)$.

Let $\tilde{g}$ be the monomial of $\tilde{S}$ such that $M(\tilde{g}) = T_l$.
\begin{enumerate}
	\item If $\deg g \ge n\left(t - \frac{1}{2} \right)(k-1)+1$, we obtain $\sigma_l \ge \left(t^2 - \frac{t}{2} \right)(k-1)+\frac{1}{n}.$ Since $t^2-\frac{t}{2} \ge t^2-\left \lfloor \frac{t}{2} \right\rfloor - 1$ and $\sigma_l$ is an integer we actually obtain $$\sigma_l \ge \left(t^2-\left \lfloor \frac{t}{2} \right\rfloor - 1 \right)(k-1)+1.$$
	
	By construction, $\deg \tilde{g}=\sigma_l$, and by Theorem \ref{partial result square} we deduce $\tilde{g} \in (\tilde{\mm}^{[k]},\tilde{f}).$ Then, since $\tilde{g}$ divides $g$ and $f \in I$ we obtain $g \in (\mm^{[k]},I)$.
	\item If $t=m=3,4$ and $\deg g \ge n(t-1)(k-1)+1$ then $\sigma_l \ge t(t-1)(k-1)+1$. By construction $\deg \tilde{g}=\sigma_l$, hence Theorems \ref{sol3} and \ref{sol4} imply that $\tilde{g} \in (\tilde{\mm}^{[k]},\tilde{f}),$ and thus $g \in (\mm^{[k]},I)$.
\end{enumerate}
\end{proof}

Now we prove the main results.

\setcounter{thm}{3}
\begin{thm}
    Let $R=S/I_t(X)$, where $X$ is a $n\times m$ matrix of indeterminates and $t=m\leq n$. Then for $n\geq 3$ we have
    \[  c^{\mm}(\mm)\leq  n\left(t-\frac{1}{2}\right)  .
  \]
  
\end{thm}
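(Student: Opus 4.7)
The plan is to deduce the theorem directly from Proposition \ref{t=m}(1), which is the technical heart of the argument, together with the definition of the $F$-threshold as a limit.

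First I would note that Proposition \ref{t=m}(1) gives us precisely the inclusion needed to bound $\nu_k(\mm)$: since every monomial $g$ with $\deg g \ge n(t-\tfrac{1}{2})(k-1)+1$ already lies in $(\mm^{[k]},I)$, and since $\mm^r \subseteq (\mm^{[k]},I)$ whenever every monomial of degree $r$ does (because $\mm^r$ is generated by monomials of degree $r$), we immediately get
\[ \nu_k(\mm) \le n\left(t-\tfrac{1}{2}\right)(k-1). \]

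Second, I would specialize to $k=p^e$ for $K$ of characteristic $p$, so that $\nu^{\mm}_{\mm}(p^e)=\nu_{p^e}(\mm)$ as observed in the Set up section. Dividing by $p^e$ and passing to the limit,
\[ c^{\mm}(\mm) \;=\; \lim_{e\to\infty}\frac{\nu_{p^e}(\mm)}{p^e} \;\le\; \lim_{e\to\infty}\frac{n(t-\tfrac{1}{2})(p^e-1)}{p^e} \;=\; n\left(t-\tfrac{1}{2}\right), \]
which is exactly the claimed upper bound. The assumption $n\geq 3$ is inherited from the regime in which Proposition \ref{t=m} (and behind it, Theorem \ref{partial result square} for the square case $t\times t$ block) is applied; no further hypotheses are needed.

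There is no real obstacle at this stage, since all of the combinatorial work has already been carried out in Section~3 (via the $d$-representation machinery and the extraction of a $k$-special diagonal) and transferred to the rectangular case $t=m\le n$ by the averaging/counting argument in Proposition \ref{t=m}. The only thing to be careful about is the passage from ``every monomial of a given degree lies in $(\mm^{[k]},I)$'' to the bound on $\nu_k(\mm)$, but this is immediate from the definition of $\nu_k(\mm)$ as the maximal $r$ such that $\mm^r\not\subseteq(\mm^{[k]},I)$.
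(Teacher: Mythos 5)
Your proposal is correct and follows essentially the same route as the paper: apply Proposition \ref{t=m}(1) with $k=p^e$ to get $\nu_{p^e}(\mm)\le n\left(t-\frac{1}{2}\right)(p^e-1)$, then divide by $p^e$ and pass to the limit defining $c^{\mm}(\mm)$. Your extra remark on passing from the monomial-degree containment to the bound on $\nu_k(\mm)$ is a harmless elaboration of what the paper leaves implicit.
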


\begin{proof}
By Proposition \ref{t=m} (1) with $k=p^e$ we obtain $\nu_{p^e}(\mm) \le n\left(t - \frac{1}{2} \right)(p^e-1),$ and thus  $$\displaystyle c^{\mm}(\mm)=\lim_{e\rightarrow\infty}\frac{\nu_{p^e}(\mm)}{p^e}\leq n\left(t-\frac{1}{2}\right).$$
\end{proof}

\begin{thm}
    Let $R=S/I_t(X)$, where $X$ is a $n\times m$ matrix of indeterminates and $t=m\leq n$. Then for $t=3,4$ we have  
    \[    c^{\mm}(\mm)=n(t-1). \]
\end{thm}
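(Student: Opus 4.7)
The plan is to combine the sharp lower bound from Theorem \ref{lowerbound} with the sharp upper bound already encapsulated in part (2) of Proposition \ref{t=m}. Since Theorem \ref{lowerbound} gives the chain $m(t-1)\leq n(t-1) \leq c^{\mm}(\mm)$, it suffices to establish the matching upper bound $c^{\mm}(\mm)\leq n(t-1)$ when $t\in\{3,4\}$.

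First, I would instantiate Proposition \ref{t=m} (2) with $k=p^e$. By the set-up in Section 2, $\nu_{p^e}(\mm)$ equals the maximum degree of a monomial $g\in K[X]$ not contained in $(\mm^{[p^e]},I)$. Proposition \ref{t=m} (2) asserts that every monomial of degree at least $n(t-1)(p^e-1)+1$ belongs to $(\mm^{[p^e]},I)$ when $t=m\in\{3,4\}$, so
\[
\nu_{p^e}(\mm)\;\leq\; n(t-1)(p^e-1).
\]
Dividing by $p^e$ and taking the limit as $e\to\infty$ then yields
\[
c^{\mm}(\mm)\;=\;\lim_{e\to\infty}\frac{\nu_{p^e}(\mm)}{p^e}\;\leq\; n(t-1).
\]

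Finally, I would combine this inequality with the lower bound $n(t-1)\leq c^{\mm}(\mm)$ from Theorem \ref{lowerbound}, which forces equality. In effect, this proof is a direct generalization of Corollary \ref{cm34} (the square case $m=n$) to rectangular matrices, with the role of Theorems \ref{sol3}--\ref{sol4} replaced by the reduction-to-the-square-case argument already performed inside Proposition \ref{t=m}.

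I do not expect any genuine obstacle here: the work has been done upstream. The combinatorial heart of the argument (the existence of $k$-special diagonals for monomials of the right degree, via Proposition \ref{specialin}) lives in Section 3 for the square case $t=m=n$, and the averaging/pigeonhole argument producing a $t\times t$ minor whose associated monomial has sufficiently large degree lives in Proposition \ref{t=m}. The present theorem is simply the characteristic-$p$ specialization obtained by setting $k=p^e$ and passing to the limit, then matching against the lower bound from Theorem \ref{lowerbound}.
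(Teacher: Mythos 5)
Your proposal is correct and follows essentially the same route as the paper: apply Proposition \ref{t=m}(2) with $k=p^e$ to get $\nu_{p^e}(\mm)\le n(t-1)(p^e-1)$, pass to the limit for the upper bound, and match it against the lower bound $n(t-1)=-a(R)\le c^{\mm}(\mm)$ from Theorem \ref{lowerbound}. No gaps.
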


\begin{proof}
Proposition \ref{t=m} part (2) applied with $k=p^e$ proves that 
\[\nu_{p^e}(\mm) \le n(t - 1)(p^e-1) \] and thus  $\displaystyle c^{\mm}(\mm)=\lim_{e\rightarrow\infty}\frac{\nu_{p^e}(\mm)}{p^e}\leq n(t-1).$ The thesis follows from the lower bound in Theorem \ref{lowerbound} with the negative $a$-invariant, that is $n(t-1) = -a(R)\le c^\mm(\mm) \le n(t-1)$.
\end{proof}

The argument outlined in these proofs actually shows that the more general Conjecture \ref{conj F-threshold} for the case $t=m\le n$ is actually a consequence of Conjecture \ref{squareconj} for the case $t=m=n$, and any progress in solving the latter would give an analogous result on the former.

\section*{Acknowledgements} 
         We are grateful to the organizers of the P.R.A.G.MAT.I.C. research school held in Catania in June 2023 that gave us the opportunity to work together. We thank Luis Nuñez-Betancourt, Eamon Quinlan-Gallego and Alessio Sammartano for the helpful conversations and their careful proof-reading. The fourth author is partially supported by MATRICS grant MTR/2021/000879.
        
	\small
	\bibliographystyle{alpha}
	\bibliography{References}
	
\end{document}